\documentclass[12pt,reqno]{amsart}
\usepackage[english]{babel}
\usepackage{amsthm}
\usepackage{inputenc}
\usepackage{amssymb}

\tolerance=5000 \topmargin -1cm \oddsidemargin=0,5cm
\evensidemargin=-0,2cm \textwidth 15.6cm \textheight 24cm
\linespread{1.0}
\newtheorem{thm}{Theorem}[section]

\newtheorem{lem}[thm]{Lemma}
\newtheorem{prop}[thm]{Proposition}
\newtheorem{defn}[thm]{Definition}

\newtheorem{exam}[thm]{Example}
\DeclareMathOperator{\gr}{gr}
\begin{document}

\title[Naturally graded Zinbiel algebras]{The classification of naturally graded Zinbiel algebras with characteristic sequence equal to $(n-p,p)$}

\author{J.K. Adashev}
\address{[J.K. Adashev] Institute of Mathematics, National University of Uzbekistan, Tashkent, 100125, Uzbekistan}
\email{adashevjq@mail.ru}
\author{M. Ladra}
\address{[M. Ladra] Department of Algebra, University of Santiago de Compostela, 15782, Spain.}
\email{manuel.ladra@usc.es}
\author{B.A. Omirov}
\address{[B.A. Omirov] Institute of Mathematics, National University of Uzbekistan, Tashkent, 100125, Uzbekistan}
\email{omirovb@mail.ru}

\thanks{The last two authors were supported by Ministerio de Econom\'ia y Competitividad (Spain), grant MTM2013-43687-P (European FEDER support included).
The second author was also supported by Xunta de Galicia, grant GRC2013-045 (European FEDER support included).}

\begin{abstract}
This work is a continuation of the description of some classes of nilpotent Zinbiel algebras.
 We focus on the study of Zinbiel algebras with restrictions to gradation and characteristic sequence.
  Namely, the classification of naturally graded Zinbiel algebras with characteristic sequence equal to $ (n-p, p)$ is obtained.
\end{abstract}

\subjclass[2010]{17A30, 17A32.}
\keywords{Zinbiel algebra, nilpotency, natural gradation, characteristic sequence, classification.}

\maketitle

\section{Introduction}

This paper is devoted to investigation of algebras, which are Koszul dual to Leibniz algebras. These algebras  were introduced in the middle of 90-th of the last century by the
 French mathematician J.-L. Loday \cite{L} and they are called Zinbiel algebras (Leibniz written in reverse order).

A crucial fact of the theory of finite dimensional Zinbiel algebras is the nilpotency of such algebras over a field of zero characteristic \cite{DT}. Since  the description
of finite-dimensional complex Zinbiel algebras is a boundless problem (even if they are nilpotent), their study should be carried out by adding some additional restrictions (on index of nilpotency, gradation, characteristic sequence, etc).

In general, investigation of Zinbiel algebras goes parallel to the study of nilpotent Leibniz algebras.
For instance, $n$-dimensional Leibniz algebras of nilindices $n+1$ and $n$ (which is equivalent to admit characteristic sequences equal to $(n)$ and $(n-1,1)$, respectively) were described in papers \cite{Sib} and \cite{Fil}. Similar description for Zinbiel algebras were obtained in the paper \cite{AOK}.

In the study of $n$-dimensional Leibniz algebras of nilindex $n-1$ (see \cite{2-Fil}) it was noted that
characteristic sequences of such algebras are equal to either $(n-2, 1,1)$ or $(n-2, 2)$. Description of Leibniz (Zinbiel) algebras with such characteristic sequence were obtained in \cite{Quasi-Fil} and \cite{2-Fil} (respectively, \cite{AOK2}).

Later on, naturally graded Leibniz algebras of nilindex $n-2$ that admit the following characteristic sequences
\[(n-3, 3), \quad (n-3, 2, 1), \quad (n-3, 1, 1, 1)\]
 were investigated in a series of papers \cite{n-3,Red,p-Fil}, respectively. Description of naturally graded Zinbiel algebras with these properties was given in \cite{A} and \cite{n-3Zin}.

Finally, the latest progress in the description of the structure of nilpotent Leibniz algebras was obtained in papers \cite{p-Fil}  and \cite{Kam}.
 In particular, naturally graded nilpotent $n$-dimensional Leibniz algebras with characteristic sequences equal to $(n-p, p)$ and $(n-p, 1 \dots, 1)$ were described.
  Since the description of $p$-filiform Zinbiel algebras (that are Zinbiel algebras with characteristic sequence equal to $(n-p, 1 \dots, 1)$) was obtained in \cite{Zin},
   in order to complete the description similar to \cite{p-Fil}, in this paper we present the description (up to isomorphism)
    of naturally graded Zinbiel algebras with characteristic sequence equal to $(n-p, p)$.

All considered algebras and vector spaces in this work are assumed to be finite dimensional and complex. In order to keep tables of multiplications of algebras short, we will omit zero products.

\section{Preliminaries}

In this section we give definitions and known results necessary to proceed further to the main part of the work.

\begin{defn}
An algebra $A$ over a field $F$ is called a Zinbiel algebra if for any $x, y, z\in A$ the following identity holds:
\[(x\circ y)\circ z=x\circ(y\circ z)+x\circ(z\circ y),\]
where $\circ$ is the multiplication of the algebra $A$.
\end{defn}

For an arbitrary Zinbiel algebra we define the \emph{lower series} as follows:
\[A^1 = A, \quad A^{k+1} = A\circ A^k,  \quad  k\geq1.\]

\begin{defn} A Zinbiel algebra $A$ is called nilpotent if there exists $s\in \mathbb{N}$ such that $A^s=0$. The minimal such number is called the nilindex of $A$.
\end{defn}

\begin{defn} An $n$-dimensional Zinbiel algebra $A$ is called null-filiform if $\dim A^i = (n + 1) - i$ for $1 \leq i
\leq n + 1$.
\end{defn}

It is clear by definition that an algebra $A$ being null-filiform is equivalent to admitting the maximal possible nilindex.

Let $x$ be an element of the set $A\setminus A^2$. For an operator of a left multiplication $L_x$ (defined as  $L_x(y)=x\circ y$) we define a descending sequence $C(x)=(n_1, n_2, \dots, n_k)$, where $n =
n_1+n_2+\dots+n_k$, which consists of the sizes of Jordan blocks of the operator $L_x$. On the set of such sequences we consider the lexicographical order, that is, $(n_1, n_2, \dots, n_k)\leq
(m_1, m_2, \dots, m_s)$ if there exists $i\in N$ such that $n_j=m_j$ for all $j<i$ and $n_i<m_i$.

\begin{defn} The sequence $C(A)=\mathop {\max
}\limits_{x \in A\setminus A^2}C(x)$ is called the characteristic sequence of the algebra $A$.
\end{defn}

\begin{exam}
Let $C(A) = (1, 1, \dots, 1)$. Then  the algebra $A$ is abelian.
\end{exam}
\begin{exam}
An $n$-dimensional Zinbiel algebra $A$ is null-filiform if and only if $C(A) = (n)$.
\end{exam}

Let $A$ be a finite-dimensional Zinbiel algebra of nilindex $s$. We set $A_i:=A^i/A^{i+1}, \ \ 1 \leq i \leq s-1$, and $\gr A: = A_1 \oplus A_2\oplus \cdots\oplus
A_{s-1}$. From the condition $A_i\circ A_j\subseteq A_{i+j}$ we derive a graded algebra $\gr A$. The graduation constructed in a such way is called the natural graduation.
 If a Zinbiel algebra $A$ is isomorphic to the  algebra $\gr A$, then the algebra $A$ is called a naturally graded Zinbiel algebra.

Further we need the following lemmas.

\begin{lem}[\cite{G}]\label{lem2} For any $n, a \in \mathbb{N} $
the following equality holds
\[ \sum\limits_{k = 0}^n {( - 1)^k C_a^k C_{a + n - k - 1}^{n - k} }
= 0.\]
\end{lem}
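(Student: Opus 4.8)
The plan is to recognise the left-hand side as the coefficient of $x^n$ in the product of two formal power series that multiply to $1$. Throughout I take $C_m^j=\binom{m}{j}$, and I read the statement for $n\geq 1$; note that at $n=0$ the sum equals $1$, so a restriction of this type is unavoidable.

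The first step is to recall the two expansions in $\mathbb{C}[[x]]$, valid for $a\in\mathbb{N}$:
\[(1-x)^a=\sum_{k\geq0}(-1)^kC_a^k\,x^k,\qquad (1-x)^{-a}=\sum_{m\geq0}C_{a+m-1}^{\,m}\,x^m,\]
the first being the ordinary (finite) binomial expansion and the second the negative binomial series. Forming the Cauchy product and reading off the coefficient of $x^n$ (set $m=n-k$) gives precisely
\[[x^n]\bigl((1-x)^a(1-x)^{-a}\bigr)=\sum_{k=0}^{n}(-1)^kC_a^k\,C_{a+n-k-1}^{\,n-k},\]
which is exactly the left-hand side of the asserted identity.

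The second step is then immediate: since $(1-x)^a(1-x)^{-a}=1$ in $\mathbb{C}[[x]]$, the coefficient of $x^n$ on the left vanishes for every $n\geq1$, which is the claim. The only point requiring any care is matching the summand of the lemma against the Cauchy product and keeping track of the $n=0$ exception; there is no genuine obstacle here, only bookkeeping.

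For an alternative that avoids power series altogether, the upper-negation rule $C_{a+n-k-1}^{\,n-k}=(-1)^{n-k}\binom{-a}{n-k}$ rewrites the sum as $(-1)^n\sum_{k=0}^{n}C_a^k\binom{-a}{n-k}$, and the Chu--Vandermonde convolution evaluates this to $(-1)^n\binom{a+(-a)}{n}=(-1)^n\binom{0}{n}=0$ for $n\geq1$. Either route proves Lemma~\ref{lem2}.
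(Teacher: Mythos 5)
Your argument is correct, and it is worth noting that the paper itself offers no proof of this lemma at all: it is stated with a bare citation to Gel'fond's \emph{Calculus of finite differences}, so there is nothing internal to compare against. Your generating-function route is the standard one and is airtight: the sum is exactly the Cauchy-product coefficient $[x^n]\bigl((1-x)^a(1-x)^{-a}\bigr)$, and since the product is the constant series $1$, the coefficient vanishes for $n\geq 1$. Your observation that the identity fails at $n=0$ (where the sum equals $1$) is a genuine and useful correction to the statement as printed; the lemma is only invoked in the paper for the verification that the type-$II$ algebra with $n=3p+1$ and $\beta_i=(-1)^iC_p^i$ satisfies relation \eqref{eq9}, where the relevant indices are positive, so the restriction to $n\geq 1$ costs nothing. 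The Chu--Vandermonde alternative via upper negation, $C_{a+n-k-1}^{n-k}=(-1)^{n-k}\binom{-a}{n-k}$ followed by $\sum_k\binom{a}{k}\binom{-a}{n-k}=\binom{0}{n}$, is the same computation in combinatorial rather than analytic clothing and is equally valid. Either version is a complete, self-contained proof that the paper leaves to the literature.
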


\begin{lem}[\cite{DT}]\label{lem1}Let $A$ be a Zinbiel algebra with the following products to be known
\[e_1\circ e_i = e_{i+1}, \quad 1\leq i\leq k-1.\]
Then
\[e_i\circ e_j =C_{i + j -1}^je_{i+j},  \quad  2\leq i+j \leq k,\]
where $C_a^b=\binom{a}{b}$ denotes the binomial coefficient.
\end{lem}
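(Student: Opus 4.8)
The plan is to prove the formula by strong induction on the \emph{weight} $s=i+j$, ranging over $2\le s\le k$, with $i,j\ge 1$. For the base case $s=2$ the only pair is $(i,j)=(1,1)$, and the hypothesis gives $e_1\circ e_1=e_2$, which agrees with $C_{1}^{1}e_2$. So the base case is immediate.

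For the inductive step I would fix a pair $(i,j)$ with $i+j=s\ge 3$ and assume the claim for all pairs of smaller weight. If $i=1$ the assertion follows at once from $e_1\circ e_j=e_{j+1}$ and $C_{j}^{j}=1$ (and here $1\le j\le k-1$ since $s\le k$). If $i\ge 2$, the key move is to write $e_i=e_1\circ e_{i-1}$ — legitimate because $1\le i-1\le k-1$ — and apply the Zinbiel identity with $x=e_1$, $y=e_{i-1}$, $z=e_j$:
\[
e_i\circ e_j=(e_1\circ e_{i-1})\circ e_j=e_1\circ(e_{i-1}\circ e_j)+e_1\circ(e_j\circ e_{i-1}).
\]
Both $e_{i-1}\circ e_j$ and $e_j\circ e_{i-1}$ have weight $s-1$, so the induction hypothesis applies to each and rewrites the right-hand side as $\bigl(C_{i+j-2}^{j}+C_{i+j-2}^{i-1}\bigr)\,e_1\circ e_{i+j-1}$; since $i+j-1\le k-1$ we may replace $e_1\circ e_{i+j-1}$ by $e_{i+j}$.

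It then remains only to simplify the scalar. Using the symmetry $C_{i+j-2}^{i-1}=C_{i+j-2}^{j-1}$ together with Pascal's rule $C_{i+j-2}^{j}+C_{i+j-2}^{j-1}=C_{i+j-1}^{j}$, one obtains exactly $e_i\circ e_j=C_{i+j-1}^{j}e_{i+j}$, which closes the induction. The substantive points to watch are, first, the bookkeeping that every index produced along the way (namely $i-1$, $i+j-1$, $i+j$) stays in the range where the hypotheses are available, which follows cleanly from $i+j\le k$ and $i,j\ge 1$; and second, recognizing that the combinatorial identity needed is nothing more than Pascal's rule after the symmetry step — in particular Lemma~\ref{lem2} is not needed for this argument. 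I expect the index bookkeeping to be the only place requiring genuine care, since the algebraic content is a single application of the defining identity.
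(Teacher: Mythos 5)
Your proof is correct: the induction on the weight $i+j$, the decomposition $e_i=e_1\circ e_{i-1}$ followed by the Zinbiel identity, and the simplification of the scalar via the symmetry $C_{i+j-2}^{i-1}=C_{i+j-2}^{j-1}$ and Pascal's rule all go through, and the index bookkeeping ($i-1$, $i+j-1$, $i+j$ all staying in range when $i+j\le k$) is sound. The paper gives no proof of this lemma --- it is quoted from \cite{DT} --- but your argument is precisely the inductive technique the paper applies throughout (for instance in the computation of $e_{i+1}\circ f_j$), so there is no divergence of method to report.
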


\section{Main results}

Let $A$ be a Zinbiel algebra and $C(A)=(n_1, n_2, \dots, n_k)$ its characteristic sequence. Then there exists a basis $\{e_1, e_2, \dots, e_n\}$ such that the matrix of an operator of the left multiplication by the element $e_1$ has the form
\[
L_{e_1, \sigma}=\begin{pmatrix}
J_{n_{\sigma(1)}}& 0&\dots&0 \\
0& J_{n_{\sigma (2)}}&\dots& 0  \\
\vdots&\vdots&\vdots&\vdots\\
0&0&\dots&J_{n_{\sigma(s)}}
\end{pmatrix},
\]
where $\sigma(i)$ belongs to $\{1, 2, \dots, s\}$.

By a suitable permutation of basis elements we can assume that $n_{\sigma(2)}\geq
n_{\sigma(3)}\geq\cdots \geq n_{\sigma(s)}$.

Let $A$ be a naturally graded Zinbiel algebra with characteristic sequence equal to $(n_1, n_2, \dots, n_k)$.

\begin{prop}\label{prop3} There is no naturally graded Zinbiel algebra with $n_{\sigma(1)} = 1$ and $n_{\sigma(2)}\geq 4$.
\end{prop}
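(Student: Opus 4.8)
The plan is to argue by contradiction, reducing everything to a short computation with the Zinbiel identity. First I would pin down what the two hypotheses say about the operator $L_{e_1}$. Since $e_1\in A\setminus A^2$ realizes the characteristic sequence and $\mathrm{Im}\,L_{e_1}=e_1\circ A\subseteq A\circ A=A^2$, the vector $e_1$ lies outside the image of $L_{e_1}$, and hence heads its own Jordan block; that block has size $1$ precisely when $e_1$ also lies in $\ker L_{e_1}$, i.e.\ when $e_1\circ e_1=0$. So the hypothesis $n_{\sigma(1)}=1$ is nothing but the assumption $e_1\circ e_1=0$. On the other hand, $n_{\sigma(2)}\geq 4$ says that $L_{e_1}$ has a Jordan block of size at least $4$, so there is an element $u_1\in A$ for which, writing $u_2:=e_1\circ u_1$, $u_3:=e_1\circ u_2$ and $u_4:=e_1\circ u_3$, the vectors $u_1,u_2,u_3,u_4$ are linearly independent; in particular $u_3\neq 0$ and $u_4\neq 0$.

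The heart of the argument is then three instances of the Zinbiel identity $(x\circ y)\circ z=x\circ(y\circ z)+x\circ(z\circ y)$. Taking $(x,y,z)=(e_1,e_1,u_1)$ and using $e_1\circ e_1=0$ gives $0=e_1\circ u_2+e_1\circ(u_1\circ e_1)$, hence $e_1\circ(u_1\circ e_1)=-u_3$. Taking $(x,y,z)=(e_1,u_1,e_1)$ gives $u_2\circ e_1=e_1\circ(u_1\circ e_1)+e_1\circ u_2=-u_3+u_3=0$. Finally, taking $(x,y,z)=(e_1,e_1,u_2)$ and using $e_1\circ e_1=0$ together with $u_2\circ e_1=0$ gives $0=e_1\circ u_3+e_1\circ(u_2\circ e_1)=u_4$, contradicting $u_4\neq 0$.

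I do not expect a serious obstacle: the only steps needing care are the translation of $n_{\sigma(1)}=1$ into $e_1\circ e_1=0$ via the natural grading, and the (slightly non-obvious) choice of the middle triple $(e_1,u_1,e_1)$, which is what forces $u_2\circ e_1=0$ and makes the last equation collapse to $u_4=0$. In fact this computation proves a bit more, namely that whenever $e_1\circ e_1=0$ every Jordan block of $L_{e_1}$ has size at most $3$; the proposition is the assertion that size $4$ or larger cannot occur.
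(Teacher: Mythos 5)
Your proof is correct and follows essentially the same route as the paper: you establish $e_1\circ e_1=0$ from $n_{\sigma(1)}=1$, deduce $u_2\circ e_1=0$ (your two applications of the raw Zinbiel identity amount to the paper's use of the derived identity $(a\circ b)\circ c=(a\circ c)\circ b$), and then expand $(e_1\circ e_1)\circ u_2=0$ to force $u_4=0$, exactly the paper's contradiction $e_5=0$. Your closing remark that the computation shows $L_{e_1}^3=0$ whenever $e_1\circ e_1=0$ is also a correct reading of what the argument actually proves.
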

\begin{proof}
From the condition of the proposition we have the products:
\[\begin{array}{lll}
e_1\circ e_1 = 0, &  e_1\circ e_i = e_{i+1}, & 2\leq i\leq
4,\\[1mm]
e_1\circ e_i = e_{i+1},  & \sum\limits_{k=1}^t{n_{\sigma
(k)}}+1\leq i \leq \sum\limits_{k=1}^{t+1}{n_{\sigma (k)}}-1, &
3\leq t\leq s-1,\\[1mm]e_1\circ e_i=0, & i=\sum\limits_{k=1}^t{n_{\sigma
(k)}},& 3\leq t\leq s.\end{array}\]

By using the property of Zinbiel algebras:
\[(a\circ b)\circ c = (a\circ c)\circ b,\]
we obtain
\[e_3\circ e_1 = (e_1\circ e_2)\circ e_1 = (e_1\circ e_1)\circ
e_2=0 \Rightarrow e_3\circ e_1 = 0.\]

The chain of equalities
\[0 = (e_1\circ e_1)\circ e_3=e_1\circ(e_1\circ e_3)+e_1\circ(e_3\circ e_1)=e_1\circ e_4 =
e_5\] implies $e_5 = 0$, that is, we get a contradiction with the condition $n_{\sigma(2)}\geq 4$ which completes the proof of the proposition.
\end{proof}

The next example shows that the condition
$n_{\sigma(2)}\geq 4$ of Proposition~\ref{prop3} is essential.

\begin{exam}
Let $A$ be a four-dimensional Zinbiel algebra with the table of multiplication:
\[e_1\circ e_2 = e_3, \quad e_1\circ e_3 = e_4,  \quad e_2\circ e_1 =-e_3. \]

Then $C(A)=(3, 1)$ and the matrix of the  operator of the left multiplication on $e_1$ has the form: $\begin{pmatrix}
J_1& 0 \\
0&J_3 \end{pmatrix} $.
\end{exam}
Let $A$ be an arbitrary Zinbiel algebra with characteristic sequence equal to $(n-p, p)$. Then the matrix of  the operator of the left multiplication by $e_1$ admits one of the following forms:
\[ I.
\begin{pmatrix}
J_{n-p}&0 \\
0&{J_p}
\end{pmatrix};
\qquad II. \begin{pmatrix}
J_p&0 \\
0&J_{n-p}
\end{pmatrix}, \ \ \ \ n\geq 2p.\]

\begin{defn} A Zinbiel algebra is called an algebra of the first type (of type $I$) if the operator $L_{e_1}$ has the form
$\begin{pmatrix}
J_{n-p}&0 \\
0&J_p
\end{pmatrix}$;  otherwise it is called an algebra of the second type (of type $II$).
\end{defn}

Taking into account results of papers \cite{AOK2}  and \cite{AOK}, we will consider only $n$-dimensional naturally graded Zinbiel algebras with $C(A) = (n-p,p)$,  $p\geq 3$.

\subsection{Classification of Zinbiel algebras of type $I$}

Let $A$ be a Zinbiel algebra of type $I$. Then we have the existence of a basis $\{e_1, e_2, \dots, e_{n-p}, f_1, f_2, \dots, f_{p} \}$
such that the products containing an element $e_1$ on the left are as follows:
\[e_1\circ e_i = e_{i+1}, \ \ 1\leq i\leq n-p-1.\]

From Lemma~\ref{lem1} we obtain
\begin{equation}\label{eq1}\begin{array}{lll}
 e_i\circ e_j = C_{i+j-1}^je_{i+j}, & 2\leq i+j\leq n-p, & e_1\circ e_p =
0,\\[2mm]
e_1\circ f_i =f_{i+1}, & 1\leq i\leq p-1, & e_1\circ f_{p} = 0.\\
 \end{array} \end{equation}

It is easy to see that \[A_1 =\langle e_1, f_1\rangle, \ A_2
=\langle e_2, f_2\rangle, \ \dots, \ A_p =\langle e_p,
f_p\rangle, \ A_{p+1} =\langle e_{p+1}\rangle, \  \dots, \
A_{n-p}=\langle e_{n-p}\rangle.\]

Let
\begin{equation}\label{eq2}\begin{array}{ll}
 f_1\circ e_i = \alpha_ie_{i+1}+\beta_if_{i+1}, & 1\leq i \leq
p-1,\\
 f_1\circ e_i = \alpha_ie_{i+1}, & p\leq i \leq n-p-1,\\
 f_1\circ f_i =\gamma_ie_{i+1}+\delta_if_{i+1}, &  1\leq i\leq
p-1,\\
f_1\circ f_p =\gamma_pe_{p+1}. &\\
\end{array}\end{equation}

\begin{prop}\label{prop1} Let $A$ be a Zinbiel algebra of type $I$.
Then for the structural constants $\alpha_i, \beta_i,  \gamma_i$
and $\delta_i$ we have the following restrictions:
\[\left\{\begin{array}{ll}
\alpha_{i+1}=0, & 1\leq i \leq
n-p-2,\\[1mm]
\beta_{i+1}=\prod\limits_{k=0}^{i}\frac{k+\beta_1}{k+1}, & 1\leq i \leq p-2,\\[1mm]
(i+1)\gamma_i=\beta_1\left(2\gamma_1+\sum\limits_{k=2}^{i}\gamma_k\right), & 1\leq i\leq n-p-2,\\[1mm]
(i+\beta_1)\delta_i=\beta_1\left(2\delta_1+\sum\limits_{k=2}^{i}\delta_k\right),
& 1\leq i\leq p-2.
\end{array}\right.\]
\end{prop}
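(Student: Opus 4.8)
The plan is to exploit the two structural identities available for Zinbiel algebras --- the defining identity $(x\circ y)\circ z = x\circ(y\circ z) + x\circ(z\circ y)$ and its consequence $(x\circ y)\circ z = (x\circ z)\circ y$ --- systematically, feeding in the known products \eqref{eq1} and the ansatz \eqref{eq2}, and then reading off recursions for the structural constants by comparing coefficients in the basis $\{e_k, f_k\}$.

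First I would handle the $\alpha_i$. Applying the identity $(e_1\circ e_1)\circ e_i = e_1\circ(e_1\circ e_i) + e_1\circ(e_i\circ e_1)$ together with $e_i\circ e_1$ (obtained from the swap identity $(e_1\circ e_{i-1})\circ e_1 = (e_1\circ e_1)\circ e_{i-1}$, which propagates through the chain) should show that the $e$-component of $f_1\circ e_i$ cannot grow: more directly, I would compute $(f_1\circ e_1)\circ e_i$ in two ways, once via the Zinbiel identity expanding $f_1\circ(e_1\circ e_i) + f_1\circ(e_i\circ e_1)$ and once via the swap identity as $(f_1\circ e_i)\circ e_1$, and match the $e_{i+2}$-coefficients. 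Using Lemma~\ref{lem1} to evaluate products like $e_2\circ e_{i+1}$, the $\alpha$'s telescope and force $\alpha_{i+1}=0$ for $1\le i\le n-p-2$, leaving only $\alpha_1$ free. The analogous computation with $f_1$ and $f_i$ in place of the second $e$'s, i.e. expanding $(f_1\circ f_1)\circ e_i$ or $(f_1\circ e_1)\circ f_i$ two ways, produces the $\beta$-recursion: the relation $(i+1)\beta_{i+1} = (i+\beta_1)\beta_i$ type identity, whose solution is the stated product $\beta_{i+1}=\prod_{k=0}^{i}\frac{k+\beta_1}{k+1}$. Here Lemma~\ref{lem2} is the tool that makes the binomial bookkeeping collapse to the clean closed form.

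Next, for the $\gamma_i$ and $\delta_i$, I would apply the Zinbiel identity to the triple $(f_1\circ e_1)\circ f_i$ or $(f_1\circ f_1)\circ e_i$, expand both sides using \eqref{eq1}, \eqref{eq2} and Lemma~\ref{lem1}, and separate the $e$-components from the $f$-components. The $e$-component, after substituting $\alpha_{i}=0$ for $i\ge 2$ (already established) and $f_1\circ f_k = \gamma_k e_{k+1}+\delta_k f_{k+1}$, should yield the cumulative relation $(i+1)\gamma_i = \beta_1\bigl(2\gamma_1 + \sum_{k=2}^{i}\gamma_k\bigr)$; the factor $\beta_1$ enters because $f_1\circ f_1$ has $e$-component $\gamma_1$ but the cross terms $f_1\circ(e_1\circ\cdots)$ drag in $\beta_1$ from $f_1\circ e_j = \alpha_j e_{j+1}+\beta_j f_{j+1}$ with the $f$-part. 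The $f$-component similarly gives $(i+\beta_1)\delta_i = \beta_1\bigl(2\delta_1+\sum_{k=2}^{i}\delta_k\bigr)$, the shift from $i+1$ to $i+\beta_1$ reflecting that we are now tracking the $f$-tower, which was already perturbed by $\beta_1$ in the $\beta$-step.

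The main obstacle I anticipate is bookkeeping rather than conceptual: one must be careful about which instance of the Zinbiel/swap identity isolates each family of constants cleanly, since $f_1\circ e_i$ and $f_1\circ f_i$ both contribute to mixed terms, and the binomial coefficients from Lemma~\ref{lem1} must be simplified correctly --- this is precisely where Lemma~\ref{lem2} is needed to verify that the accumulated sums $\sum (-1)^k C_a^k C_{a+n-k-1}^{n-k}$ vanish and the recursions close in the stated form. A secondary subtlety is the boundary behaviour near $i = p-1$ and $i=p$, where the $f$-tower terminates ($e_1\circ f_p = 0$, $f_1\circ f_p = \gamma_p e_{p+1}$) and where the index ranges in the four conclusions differ ($n-p-2$ versus $p-2$); I would check these edge cases separately to confirm no extra constraints are lost or spuriously introduced.
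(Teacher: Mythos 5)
Your overall plan follows the paper's route---derive $f_i\circ e_1$, $e_i\circ f_1$, $f_2\circ e_i$, $f_i\circ f_1$, $f_2\circ f_i$ from the Zinbiel identity, then compare coefficients in identities such as $(f_1\circ e_1)\circ e_i=f_1\circ(e_1\circ e_i)+f_1\circ(e_i\circ e_1)$ and $(f_1\circ f_i)\circ e_1=(f_1\circ e_1)\circ f_i$---but there is one genuine gap. You assert that ``the $\alpha$'s telescope and force $\alpha_{i+1}=0$\dots leaving only $\alpha_1$ free.'' The recursion that actually comes out of your computation is $(i+1)\alpha_{i+1}=\beta_1\sum_{k=1}^{i}\alpha_k$, which gives $\alpha_2=\tfrac{\beta_1}{2}\alpha_1$ and, more generally, $\alpha_{i+1}$ proportional to $\alpha_1$; so nothing vanishes unless $\alpha_1=0$ (or $\beta_1=0$). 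The paper disposes of $\alpha_1$ \emph{before} running the induction: when $\beta_1=1$ the identity $(f_1\circ f_1)\circ e_1=(f_1\circ e_1)\circ f_1$ forces $\alpha_1=0$, and when $\beta_1\neq 1$ one normalizes $\alpha_1=0$ by the basis change $f_1^\prime=\frac{\alpha_1}{\beta_1-1}e_1+f_1$. This step is not optional: it is also what removes the extra term $\delta_i\alpha_1 e_{i+2}$ from $(f_1\circ f_i)\circ e_1=(i+1)\gamma_i e_{i+2}+(\alpha_1\delta_i)e_{i+2}+(i+\beta_1)\delta_i f_{i+2}$, without which your $\gamma$-relation would not close in the stated form.

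A secondary inaccuracy: Lemma~\ref{lem2} is not needed anywhere in this proposition. The $\beta$-recursion $(i+1)\beta_{i+1}=\beta_1\sum_{k=0}^{i}\beta_k$ collapses to $(i+1)\beta_{i+1}=(i+\beta_1)\beta_i$ by subtracting consecutive instances of itself, and the product formula follows by elementary telescoping; the alternating binomial identity of Lemma~\ref{lem2} only enters much later, in the non-existence argument for type $II$ algebras with $n\geq 3p+2$. The rest of your outline (the source of the cumulative sums $2\gamma_1+\sum_{k=2}^i\gamma_k$ via $f_2\circ f_i$, and the shift from $i+1$ to $i+\beta_1$ in the $\delta$-relation) is consistent with the paper's computation.
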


\begin{proof} First, we calculate the products $f_i\circ
e_1$ and $f_2\circ e_i$.

Consider
\[f_2\circ e_1
=e_1\circ(f_1\circ e_1)+e_1\circ(e_1\circ
f_1)=\alpha_1e_3+(1+\beta_1)f_3.\]

Using the chain of equalities
\[f_i\circ e_1=(e_1\circ f_{i-1})\circ
e_1 =e_1\circ(f_{i-1}\circ e_1)+e_1\circ(e_1\circ f_{i-1}),\]
we deduce $f_i\circ e_1 =\alpha_1e_{i+1}+(i-1+\beta_1)f_{i+1}$ for
$1\leq i \leq p-1$ and $f_i\circ e_1 =\alpha_1e_{i+1}$ for $p\leq i
\leq n-p-1$.

From the equality
\[e_i\circ f_1=(e_1\circ e_{i-1})\circ f_1 =e_1\circ(e_{i-1}\circ
f_1)+e_1\circ(f_1\circ e_{i-1}),\] we obtain $e_i\circ
f_1=\sum\limits_{k=1}^{i-1}\alpha_k
e_{i+1}+\sum\limits_{k=0}^{i-1}\beta_{k}f_{i+1}$ for $1\leq i\leq
p-1$ and $e_i\circ f_1=\sum\limits_{k=1}^{i-1}\alpha_{k}e_{i+1}$ for
$p\leq i\leq n-p-1$.

Consider
\[f_2\circ e_2
=e_1\circ(f_1\circ e_2)+e_1\circ(e_2\circ
f_1)=(\alpha_1+\alpha_2)e_4+(1+\beta_1+\beta_2)f_4.\]

From $f_2\circ e_i=(e_1\circ f_{1})\circ e_i =e_1\circ(f_{1}\circ
e_i)+e_1\circ(e_i\circ f_{1})$, we have
\[\begin{array}{ll}
f_2\circ
e_i=\sum\limits_{k=1}^{i}\alpha_ke_{i+2}+\sum\limits_{k=0}^{i}\beta_kf_{i+2},
& 1\leq i\leq p-2, \\[1mm]
f_2\circ e_i=\sum\limits_{k=1}^{i}\alpha_ke_{i+2}, & p-1\leq i\leq
n-p-2.
\end{array}\]

Now we calculate the products $f_i\circ f_1$ and $f_2\circ f_i$.

We have
\[f_2\circ f_1 = (e_1\circ f_{1})\circ f_1
=2\gamma_1e_3+2\delta_1f_3, \qquad f_3\circ f_1 = (e_1\circ
f_{2})\circ f_1 =(2\gamma_1+\gamma_2)e_4+(2\delta_1+\delta_2)f_4.\]

By induction we obtain
\[\begin{array}{ll}
f_i\circ f_1
=(2\gamma_1+\sum\limits_{k=2}^{i-1}\gamma_{k})e_{i+1}+(2\delta_1+\sum\limits_{k=2}^{i-1}\delta_{k})f_{i+1},&2\leq
i \leq p-1,\\[1mm]
f_{p}\circ f_1
=(2\gamma_1+\sum\limits_{k=2}^{p-1}\gamma_{k})e_{p+1}.&\end{array}\]

Similarly, from $f_2\circ f_i=(e_1\circ f_{1})\circ f_i
=e_1\circ(f_{1}\circ f_i)+e_1\circ(f_i\circ f_{1})$ we derive
\[\begin{array}{ll}
f_2\circ f_i
=(2\gamma_1+\sum\limits_{k=2}^{i}\gamma_{k})e_{i+2}+(2\delta_1+\sum\limits_{k=2}^{i}\delta_{k})f_{i+2},&
2\leq i \leq p-2,\\[1mm]
f_2\circ f_{i}
=(2\gamma_1+\sum\limits_{k=2}^{i}\gamma_{k})e_{i+2}, &p-1\leq i
\leq p.\end{array}\]

If $\beta_1=1$, then from the equality $(f_1\circ f_{1})\circ e_1
=(f_1\circ e_{1})\circ f_1$ we get
\[2\gamma_1(1-\beta_1)=\alpha_1^2-\delta_1\alpha_1,\ \
(1-\beta_1)\delta_1=\alpha_1(1+\beta_1).\]
Consequently, $\alpha_1=0$.

Let $\beta_1\neq1$. Then taking the following change:
\[e_1^\prime=e_1, \qquad f_1^\prime
=\frac{\alpha_1}{\beta_1-1}e_1+f_1,\] we obtain $\alpha_1^\prime=0$.

Form the equalities
\[f_1\circ e_{i+1} = f_1\circ (e_1\circ e_i) = (f_1\circ e_1)\circ e_i - if_1\circ e_{i+1} =\beta_1f_2\circ e_i-if_1\circ e_{i+1},\]
we derive $(i+1)f_1\circ e_{i+1} =\beta_1f_2\circ e_i$. Therefore,
\[(i+1)\alpha_{i+1}e_{i+2}+(i+1)\beta_{i+1}f_{i+2}=\beta_1\left(\sum\limits_{k=1}^{i}\alpha_ke_{i+2}+
\sum\limits_{k=0}^{i}\beta_kf_{i+2}\right).\]

Comparing coefficients at the basis elements and applying induction, we deduce
\[\begin{array}{ll}
\alpha_{i+1}=0, & 1\leq i \leq
n-p-2,\\[3mm]
\beta_{i+1}=\prod\limits_{k=0}^{i}\frac{k+\beta_1}{k+1},&1\leq i
\leq p-2.\end{array}\]

Considering the equality $(f_1\circ f_{i})\circ e_1 =(f_1\circ e_{1})\circ
f_i$ leads to the rest of the restrictions of the proposition.
\end{proof}

In the next proposition we calculate the products $e_i\circ
f_j$ and $f_j\circ e_i$.

\begin{prop} Let $A$ be a Zinbiel algebra of type $I$.
Then the following expressions are true:

\begin{equation}\label{eq3}e_i\circ
f_j=\sum\limits_{k=0}^{i-1}{C_{i+j-2-k}^{j-1}\beta_k}f_{i+j}, \ \
\text{for} \ \ 2\leq i+j\leq p, \end{equation}
\begin{equation}\label{eq4}f_i\circ
e_j=\sum\limits_{k=0}^j{C_{i+j-2-k}^{i-2}\beta_k}f_{i+j}, \ \
\text{for} \ \ 2\leq i+j\leq p, \end{equation} where $\beta_0=1$.
\end{prop}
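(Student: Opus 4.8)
My plan is to prove \eqref{eq3} and \eqref{eq4} simultaneously by induction on $i+j$, exploiting that any basis vector of degree $\geq 2$ equals $e_1$ left-multiplied by a vector of one degree less, so that the defining Zinbiel identity turns a product with a degree-$\geq2$ left factor into products of strictly smaller total degree. The only facts I would feed in from the earlier results are: by Proposition~\ref{prop1} and the change of basis made there all the $\alpha_i$ vanish, so \eqref{eq2} becomes $f_1\circ e_j=\beta_j f_{j+1}$ for $1\le j\le p-1$, while \eqref{eq1} gives $e_1\circ f_j=f_{j+1}$; these two families of degree-$1$-left products are the base of the induction, and one notes for reassurance that the products $f_i\circ e_1$, $e_i\circ f_1$, $f_2\circ e_j$ already computed inside the proof of Proposition~\ref{prop1} coincide with the right-hand sides of \eqref{eq3}, \eqref{eq4} under the convention $\beta_0=1$ (here \eqref{eq4} is read for $i\ge2$, as its exponent $i-2$ indicates).

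For the inductive step fix $(i,j)$ with $2\le i+j\le p$. If $i\ge2$ then, since $n\ge2p$ forces $e_1\circ e_{i-1}=e_i$ and $e_1\circ f_{i-1}=f_i$ in the ranges involved, the defining identity gives
\[
e_i\circ f_j=(e_1\circ e_{i-1})\circ f_j=e_1\circ(e_{i-1}\circ f_j)+e_1\circ(f_j\circ e_{i-1}),
\]
\[
f_i\circ e_j=(e_1\circ f_{i-1})\circ e_j=e_1\circ(f_{i-1}\circ e_j)+e_1\circ(e_j\circ f_{i-1}).
\]
Each of $e_{i-1}\circ f_j$, $f_j\circ e_{i-1}$, $f_{i-1}\circ e_j$, $e_j\circ f_{i-1}$ has total degree $i+j-1$, hence by the induction hypothesis (or by the base products when a left index equals $1$) is a scalar multiple of $f_{i+j-1}$; applying $e_1\circ(-)$ and using $e_1\circ f_{i+j-1}=f_{i+j}$ shows $e_i\circ f_j,\ f_i\circ e_j\in\langle f_{i+j}\rangle$ and collapses the problem to the two scalar identities
\[
A(i,j)=A(i-1,j)+B(j,i-1),\qquad B(i,j)=B(i-1,j)+A(j,i-1),
\]
where $A(i,j)$, $B(i,j)$ denote the coefficients appearing in \eqref{eq3}, \eqref{eq4} and $B(1,\ell)$ is understood as the true value $\beta_\ell$.

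What then remains is to check that the proposed closed forms satisfy these recursions, which is a plain binomial computation (the cases in which a left index equals $1$ being handled directly by the base products): expanding
\[
A(i-1,j)+B(j,i-1)=\sum_{k=0}^{i-2}\bigl(C_{i+j-3-k}^{j-1}+C_{i+j-3-k}^{j-2}\bigr)\beta_k+C_{j-2}^{j-2}\beta_{i-1}
\]
and applying Pascal's rule $C_m^{r-1}+C_m^{r}=C_{m+1}^{r}$ to each bracket converts the right-hand side into $\sum_{k=0}^{i-1}C_{i+j-2-k}^{j-1}\beta_k=A(i,j)$; the identity $B(i-1,j)+A(j,i-1)=B(i,j)$ is obtained in exactly the same manner, now with the exponent $i-2$ playing the role of $j-1$.

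The binomial bookkeeping is routine; the point that demands real attention is index control. One has to verify at every stage that $i+j\le p$ (so that $f_{i+j}$ exists and $e_1\circ f_{i+j-1}=f_{i+j}$ holds), and that the two ``transposed'' products $f_j\circ e_{i-1}$ and $e_j\circ f_{i-1}$ thrown up by the expansion really are legitimate instances of \eqref{eq4} and \eqref{eq3} of strictly smaller total degree $i+j-1$ — or, when their left index degenerates to $1$, are the base products $f_1\circ e_\ell=\beta_\ell f_{\ell+1}$ and $e_1\circ f_\ell=f_{\ell+1}$ — so that the induction on $i+j$ genuinely closes.
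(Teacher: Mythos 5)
Your proposal is correct and follows essentially the same route as the paper: the base products $e_1\circ f_j=f_{j+1}$, $f_1\circ e_j=\beta_jf_{j+1}$, the Zinbiel expansion $e_i\circ f_j=e_1\circ(e_{i-1}\circ f_j)+e_1\circ(f_j\circ e_{i-1})$ (and its twin), and Pascal's rule $C_m^{r-1}+C_m^{r}=C_{m+1}^{r}$. The only difference is organizational: you induct on the total degree $i+j$ rather than on the first index $i$ as the paper does, which is in fact the tidier bookkeeping, since the cross term $f_j\circ e_{i-1}$ has left index $j$ that can exceed $i$ and so is only guaranteed by a hypothesis quantified over all products of smaller total degree.
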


\begin{proof} We shall prove \eqref{eq3}-\eqref{eq4} by induction. From \eqref{eq1} and
\eqref{eq2} we get the correctness of \eqref{eq3} and \eqref{eq4} for $i = 1$.

Consider \eqref{eq4} for $j=1$. Since $f_1\circ e_1 =\beta_1f_2$ and $f_2\circ e_1 =e_1\circ(f_1\circ e_1)+e_1\circ(e_1\circ f_1)=(1+\beta_1)f_3$, then using the equalities
$f_i\circ e_1=(e_1\circ f_{i-1})\circ e_1 =e_1\circ(f_{i-1}\circ e_1)+e_1\circ(e_1\circ f_{i-1})$ and induction, we deduce $f_i\circ e_1 =(i-1+\beta_1)f_{i+1}$ for $1\leq i \leq p-1$.

From
\[e_i\circ f_1=(e_1\circ e_{i-1})\circ f_1 =e_1\circ(e_{i-1}\circ
f_1)+e_1\circ(f_1\circ e_{i-1}),\] it implies $e_i\circ
f_1=\sum\limits_{k=0}^{i-1}\beta_kf_{i+1}$ for $1\leq i\leq p-1$.
Therefore, the equalities \eqref{eq3} are true for $j=1$ and arbitrary $i$.

Let us suppose that expressions \eqref{eq3}-\eqref{eq4} are true for $i$ and any value of $j$. The proof of the expressions for $i+1$ is obtained by the following chain of equalities:
\begin{align*}
e_{i+1}\circ f_j & =e_1\circ(e_i\circ
f_j)+e_1\circ(f_j\circ
e_i)\\
&=e_1\circ\left(\sum\limits_{k=0}^{i-1}{C_{i+j-2-k}^{j-1}\beta_k
}f_{i+j}+\sum\limits_{k =0}^i{C_{i+j-2-k}^{j-2}\beta_k
}f_{i+j}\right)\\
& =\sum\limits_{k=0}^{i-1}{C_{i+j-2-k}^{j-1}\beta_k}f_{i+j+1}+\sum\limits_{k
=0}^i{C_{i+j-2-k}^{j-2}\beta_k}f_{i+j+1}\\
& =\left({\sum\limits_{k=0}^{i-1}{C_{i+j-2-k}^{j-1}\beta_k}+\sum\limits_{k
=0}^i{C_{i+j-2-k}^{j-2}\beta_k}}\right)f_{i+j+1}=\sum\limits_{k=0}^i{C_{i+j-1
-k}^{j-1}\beta_k}f_{i+j+1}.
\end{align*}

Here we used the well-known formula $C_n^{m-1}+C_n^m=C_{n+1}^m$.

The proof of expressions \eqref{eq4} is analogous.
\end{proof}

Below, we clarify the restrictions on structural constants of the algebra with relation to the dimension and  the parameter $\beta_1$.
\begin{prop}\label{prop5} Let $A$ be a Zinbiel algebra of type $I$. Then the following restrictions are true:

(1) Case $\dim A\geq 2p+1$.

If $\beta_1\neq1$ then \[\left\{\begin{array}{ll} \gamma_i=0,&
1\leq i\leq p-1, \\[1mm]
\delta_i=0,& 1\leq i\leq p-2,\\[1mm]
(p-1+\beta_1)\gamma_p=0, &\\[1mm]
(p-2+\beta_1)\delta_{p-1}=0.\end{array}\right.\]

If $\beta_1=1$ then
\[\left\{\begin{array}{ll}
\beta_i=1,& 1\leq i\leq p-1, \\[1mm]
\gamma_i=\gamma_1,&
1\leq i\leq p, \\[1mm]
\delta_i=\delta_1,& 1\leq i\leq p-1.\\[1mm]
\end{array}\right.\]

(2) Case $\dim A=2p$.

If $\beta_1\neq1$ then
\[\left\{\begin{array}{ll} \gamma_i=0,&
1\leq i\leq p-2, \\[1mm]
\delta_i=0,& 1\leq i\leq p-2,\\[1mm]
(p-2+\beta_1)\gamma_{p-1}=0, &\\[1mm]
(p-2+\beta_1)\delta_{p-1}=0.\end{array}\right.\]

If $\beta_1=1$ then
\[\left\{\begin{array}{ll}
\beta_i=1,& 1\leq i\leq p-1, \\[1mm]
\gamma_i=\gamma_1,&
1\leq i\leq p-1, \\[1mm]
\delta_i=\delta_1,& 1\leq i\leq p-1.\\[1mm]
\end{array}\right.\]
\end{prop}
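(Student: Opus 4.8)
The plan is to supplement the first–order recursions of Proposition~\ref{prop1} with one more family of identities and then to extract the four assertions according to the two dimension regimes and to the dichotomy $\beta_1=1$ / $\beta_1\neq1$. Recall that $\alpha_1=0$ has already been arranged, so that $f_1\circ e_1=\beta_1 f_2$ and, by \eqref{eq1} and \eqref{eq4}, $e_1\circ f_i=f_{i+1}$ and $f_i\circ e_1=(i-1+\beta_1)f_{i+1}$ for $1\le i\le p-1$; write $S_i=2\gamma_1+\sum_{k=2}^{i}\gamma_k$ and $S_i'=2\delta_1+\sum_{k=2}^{i}\delta_k$, so that Proposition~\ref{prop1} reads $(i+1)\gamma_i=\beta_1 S_i$ for $1\le i\le n-p-2$ and $(i+\beta_1)\delta_i=\beta_1 S_i'$ for $1\le i\le p-2$ (in particular the cases $i=1$ give $(1-\beta_1)\gamma_1=0=(1-\beta_1)\delta_1$). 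Now expand $(f_1\circ e_1)\circ f_i$ by the Zinbiel identity: $(f_1\circ e_1)\circ f_i=f_1\circ(e_1\circ f_i)+f_1\circ(f_i\circ e_1)$, which becomes
\[\beta_1\,(f_2\circ f_i)=f_1\circ f_{i+1}+(i-1+\beta_1)\,f_1\circ f_{i+1}=(i+\beta_1)\,(f_1\circ f_{i+1}),\qquad 1\le i\le p-1.\]
Substituting \eqref{eq2} for $f_1\circ f_{i+1}$ and the formula for $f_2\circ f_i$ found in the proof of Proposition~\ref{prop1}, and comparing coefficients, yields the new relations
\[(i+\beta_1)\gamma_{i+1}=\beta_1 S_i,\qquad (i+\beta_1)\delta_{i+1}=\beta_1 S_i',\]
the first valid for $1\le i\le p-1$ (with nontrivial $e_{i+2}$–coefficient precisely when $e_{i+2}\neq0$, i.e. $i\le n-p-2$), the second for $1\le i\le p-2$.

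Suppose first $\beta_1\neq1$. Then $\gamma_1=\delta_1=0$ from the cases $i=1$ above. For $i\ge2$, subtracting the new relation $(i-1+\beta_1)\gamma_i=\beta_1 S_{i-1}$ from $(i+1)\gamma_i=\beta_1 S_i$ and using $S_i-S_{i-1}=\gamma_i$ gives $\bigl((i+1)-(i-1+\beta_1)\bigr)\gamma_i=\beta_1\gamma_i$, that is $2(1-\beta_1)\gamma_i=0$, so $\gamma_i=0$ wherever both relations are available; the same device (or the one–step recursion $i\delta_i=(i-1+\beta_1)\delta_{i-1}$ obtained by subtracting consecutive $\delta$–relations of Proposition~\ref{prop1}) gives $\delta_i=0$. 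When $\dim A\ge 2p+1$ this reaches $\gamma_i=0$ for $1\le i\le p-1$ and $\delta_i=0$ for $1\le i\le p-2$; feeding $S_{p-1}=0$ into the new relation at $i=p-1$ (whose $e_{p+1}$–coefficient survives since $e_{p+1}\neq0$) gives $(p-1+\beta_1)\gamma_p=0$, and $S_{p-2}'=0$ at $i=p-2$ gives $(p-2+\beta_1)\delta_{p-1}=0$. When $\dim A=2p$ the classes $e_{p+1},f_{p+1}$ are absent, so the propagation only reaches $\gamma_i=\delta_i=0$ for $1\le i\le p-2$, and the new relation at $i=p-2$ — whose right–hand side now lies in $A_p=\langle e_p,f_p\rangle$ with both components surviving — yields $(p-2+\beta_1)\gamma_{p-1}=0$ and $(p-2+\beta_1)\delta_{p-1}=0$.

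Suppose now $\beta_1=1$. Then $\beta_{i+1}=\prod_{k=0}^{i}\frac{k+1}{k+1}=1$ by Proposition~\ref{prop1}, so $\beta_i=1$ for $1\le i\le p-1$; and the recursion $(i+1)\gamma_i=S_i$ telescopes (subtract consecutive indices) to $\gamma_i=\gamma_{i-1}$, so $\gamma_i=\gamma_1$ over the covered range, and likewise $\delta_i=\delta_1$. The new relation at the top index promotes the constancy by one: when $\dim A\ge 2p+1$, $(p-1+1)\gamma_p=S_{p-1}=2\gamma_1+(p-2)\gamma_1=p\gamma_1$ gives $\gamma_p=\gamma_1$ and $(p-2+1)\delta_{p-1}=S_{p-2}'=(p-1)\delta_1$ gives $\delta_{p-1}=\delta_1$; when $\dim A=2p$ the same computation at $i=p-2$ gives $\gamma_{p-1}=\gamma_1$ and $\delta_{p-1}=\delta_1$. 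Collecting the equalities obtained is precisely the content of the proposition.

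The point to watch is that the recursions of Proposition~\ref{prop1} by themselves do \emph{not} force $\gamma_i=0$ when $\beta_1\neq1$: for the (integer) values of $\beta_1$ at which a coefficient degenerates they admit spurious non–zero solutions. The essential step is therefore the second expansion of $f_1\circ f_{i+1}=f_1\circ(e_1\circ f_i)$ via the Zinbiel identity; combining the two resulting families of relations is what both kills those solutions and determines the boundary constants $\gamma_p$, $\gamma_{p-1}$, $\delta_{p-1}$ lying outside the range of Proposition~\ref{prop1}. The remaining work is routine bookkeeping of index ranges in the two regimes.
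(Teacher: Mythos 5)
Your proof is correct and follows essentially the same route as the paper: the paper derives the key second family of relations $(i+1)\gamma_i=(i+\beta_1)\gamma_{i+1}$ and $(i+\beta_1)\delta_i=(i+\beta_1)\delta_{i+1}$ by computing $(f_1\circ f_i)\circ e_1$ in two ways, which by the identity $(a\circ b)\circ c=(a\circ c)\circ b$ is exactly your expansion of $(f_1\circ e_1)\circ f_i$, and your new relations combined with Proposition~\ref{prop1} reproduce these verbatim. Your case analysis and bookkeeping of index ranges (which the paper compresses into one sentence) are accurate, and your remark that Proposition~\ref{prop1} alone does not kill the $\gamma_i$ for the degenerate integer values of $\beta_1$ correctly identifies why the second family is indispensable.
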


\begin{proof} Let $\dim A\geq2p+1$. Then from Proposition~\ref{prop1} we have
\begin{equation}\label{eq5}\left\{\begin{array}{ll}
(i+1)\gamma_i=\beta_1\left(2\gamma_1+\sum\limits_{k=2}^{k}\gamma_k\right), & 1\leq i\leq p-1,\\[1mm]
(i+\beta_1)\delta_i=\beta_1\left(2\delta_1+\sum\limits_{k=2}^{k}\delta_k\right),
& 1\leq i\leq p-2.
\end{array}\right.\end{equation}

Consider
\[(f_1\circ f_i)\circ e_1=f_1\circ(f_i\circ e_1)+f_1\circ(e_1\circ f_i)=(i+\beta_1)(\gamma_{i+1}e_{i+2}+\delta_{i+1}f_{i+2}).\]

On the other hand,
\[(f_1\circ f_i)\circ e_1=(\gamma_{i}e_{i+1}+\delta_{i}f_{i+1})\circ
e_1=(i+1)\gamma_{i}e_{i+2}+(i+\beta_1)\delta_{i}f_{i+2}.\] Hence,
\begin{equation}\label{eq6}\left\{\begin{array}{ll}
(i+1)\gamma_{i}=(i+\beta_1)\gamma_{i+1},& 1\leq i \leq p-1,\\[1mm]
(i+\beta_1)\delta_{i}=(i+\beta_1)\delta_{i+1}, & 1\leq i \leq
p-2.\end{array}\right.\end{equation}

Considering the cases $\beta_1\neq1$ and $\beta_1=1$ together with the expressions
\eqref{eq5} and \eqref{eq6} leads to the restrictions of the case $\dim A \geq2p+1$. The proof of the remaining case is carried out in a similar fashion.
\end{proof}

Consider a general change of basis of the algebra $A$. It is known that for  naturally graded Zinbiel algebras it is sufficient to take the change of basis in the form:
\[e_1^\prime=Ae_1+Bf_1,\qquad f_1^\prime=Ce_1+Df_1,\]
where $AD-BC\neq0$.

\begin{prop}\label{prop7} Let $A$ be a Zinbiel algebra of type $I$ and let $\beta_1\neq1$. Then
\[\left\{\begin{array}{ll}
e_{i+1}^\prime=e_1^\prime\circ e_i^\prime,& 1\leq i\leq n-p-1,\\[1mm]
f_{i+1}^\prime=e_1^\prime\circ f_i^\prime,& 1\leq i\leq p-1,\\[1mm]
e_i^\prime=A^ie_i+A^{i-1}B\sum\limits_{k=0}^{i-1}\beta_{k}f_i, & 1\leq i\leq p-1,\\[1mm]
f_i^\prime=A^{i-1}Df_i, & 1\leq i\leq p-1,\\[1mm]
C=0.&\\[1mm]
\end{array}\right. \]
\end{prop}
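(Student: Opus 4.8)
The plan is to show that once $\beta_1\neq 1$ we can successively normalise the transformed basis so that it again forms a "standard" basis of the same shape, and in the process force $C=0$. First I would start from the general change $e_1'=Ae_1+Bf_1$, $f_1'=Ce_1+Df_1$ with $AD-BC\neq 0$, and \emph{define} the higher basis vectors by the natural rule $e_{i+1}'=e_1'\circ e_i'$ for $1\le i\le n-p-1$ and $f_{i+1}'=e_1'\circ f_i'$ for $1\le i\le p-1$; this is legitimate because for a naturally graded algebra the grading components are generated by $A_1$, so these products span the correct subspaces. The task then reduces to computing these iterated products explicitly in terms of the old basis and reading off the stated closed forms.

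The key computational step is to expand $e_1'\circ e_i'$ and $e_1'\circ f_i'$ using bilinearity together with the product formulas already established: $e_1\circ e_i=e_{i+1}$, $e_1\circ f_i=f_{i+1}$, the formulas for $f_1\circ e_i$ and $f_1\circ f_i$ from \eqref{eq2}, and crucially the expressions \eqref{eq3}--\eqref{eq4} for $e_i\circ f_j$ and $f_i\circ e_j$ in the range $i+j\le p$ (note all the $e$–$f$ cross products land in the $f$-part, with no $e$-component, precisely because of the $\alpha_{i+1}=0$ and $\gamma$-, $\delta$-vanishing restrictions — though for the pure induction on $e_i'$ and $f_i'$ in the range $i\le p-1$ I mainly need that $f_1\circ e_i$ has no $e_{i+1}$-term, i.e. $\alpha_i=0$ for $i\ge 2$, which is Proposition~\ref{prop1}, and $\alpha_1=0$ after the normalisation made in its proof). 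I would run an induction on $i$: assuming $e_i'=A^ie_i+A^{i-1}B\sum_{k=0}^{i-1}\beta_kf_i$ and $f_i'=A^{i-1}Df_i$, compute
\[
e_{i+1}'=e_1'\circ e_i'=(Ae_1+Bf_1)\circ\Bigl(A^ie_i+A^{i-1}B\textstyle\sum_{k=0}^{i-1}\beta_kf_i\Bigr),
\]
distribute, use $e_1\circ e_i=e_{i+1}$, $e_1\circ f_i=f_{i+1}$, $f_1\circ e_i=\beta_i f_{i+1}$ (for $i\ge 2$; $=\beta_1f_2$ for $i=1$) and $f_1\circ f_i$ contributing only to the $f$-part in this range, and collect the coefficient of $f_{i+1}$; the binomial-type identity $\sum_{k=0}^{i-1}\beta_k+\beta_i=\sum_{k=0}^{i}\beta_k$ then yields exactly the claimed form for $e_{i+1}'$, and similarly $f_{i+1}'=e_1'\circ f_i'=A^iDf_{i+1}$.

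Finally, to extract $C=0$ I would impose that $\{e_i',f_i'\}$ really is a basis realising the type-$I$ Jordan form of $L_{e_1'}$, equivalently that $f_1'$ lies in the correct complement; the simplest route is to observe that the structural constants of the primed algebra must again satisfy the restriction proved inside Proposition~\ref{prop1} (in the case $\beta_1\neq 1$, after the normalisation $\alpha_1=0$) — or more directly, to compute $f_1'\circ e_1'=(Ce_1+Df_1)\circ(Ae_1+Bf_1)$ and compare its $e_2$-component, which must vanish for the new basis to be of the normalised type-$I$ form; since the $e_2$-coefficient comes out proportional to $C$ (times a nonzero factor built from $A$, $B$, $D$ and $\beta_1-1\neq 0$), this forces $C=0$. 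The main obstacle I anticipate is bookkeeping: keeping the $e$- and $f$-components separate through the induction, handling the $i=1$ base case (where $f_1\circ e_1=\beta_1f_2$ rather than $\beta_1f_3$-type terms) correctly, and making sure the range $i\le p-1$ is respected so that formulas \eqref{eq3}--\eqref{eq4} apply and no stray $e$-components appear; the identity $\sum_{k=0}^{i}\beta_k$ telescoping is routine once the setup is right.
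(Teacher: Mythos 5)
Your overall strategy coincides with the paper's: the higher primed vectors are defined by $e_{i+1}'=e_1'\circ e_i'$, $f_{i+1}'=e_1'\circ f_i'$, and the closed forms follow by the induction you describe. Two points, however, need tightening. First, in the induction you say that $f_1\circ f_i$ ``contributes only to the $f$-part in this range''; that is not enough. If $f_1\circ f_i=\gamma_ie_{i+1}+\delta_if_{i+1}$ had a nonzero $\delta_i$, the step $f_{i+1}'=e_1'\circ f_i'=A^{i-1}D\bigl(Af_{i+1}+B(f_1\circ f_i)\bigr)$ would produce $A^{i-1}BD\,\delta_if_{i+1}$ and destroy the claimed form $f_{i+1}'=A^iDf_{i+1}$ (and likewise pollute $e_{i+1}'$). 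What you actually need, and what Proposition~\ref{prop5} supplies precisely because $\beta_1\neq1$, is $\gamma_i=\delta_i=0$ in the relevant range $i\leq p-2$, i.e.\ $f_1\circ f_i=0$ outright.

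Second, your primary route to $C=0$ via $f_1'\circ e_1'$ does not quite close. With $\alpha_1=\gamma_1=\delta_1=0$ one gets $f_1'\circ e_1'=CAe_2+(CB+DA\beta_1)f_2$, so the $e_2$-component is just $CA$ — there is no factor $\beta_1-1$ in it — and it does not ``have to vanish'': it has to equal the $e_2$-component of $\alpha_1'e_2'+\beta_1'f_2'$, and $f_2'=e_1'\circ f_1'$ itself carries the term $ACe_2$, so the comparison only yields $AC(1-\beta_1')=\alpha_1'A^2$, which is circular unless you already control $\alpha_1'$ and $\beta_1'$. Your fallback — that the primed structure constants must satisfy the same restrictions — is the correct idea and is exactly what the paper uses, in its cleanest incarnation: since $\beta_1\neq1$ forces $f_1\circ f_1=0$ in any adapted basis, one computes $f_1'\circ f_1'=(Ce_1+Df_1)\circ(Ce_1+Df_1)=C^2e_2+CD(1+\beta_1)f_2=0$, and the $e_2$-coefficient is the perfect square $C^2$, so $C=0$ follows unconditionally. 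I recommend replacing your $f_1'\circ e_1'$ argument with this one.
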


\begin{proof} From $f_1^\prime\circ f_1^\prime =0$ we get $C=0$. The proof of the proposition is completed by considering products $e_1^\prime\circ e_i^\prime=e_{i+1}^\prime$ and
$e_1^\prime\circ f_i^\prime=f_{i+1}^\prime$.
\end{proof}

\begin{thm} \label{thm3.7} Let $A$ be an $n$-dimensional ($n\geq2p+2$) Zinbiel algebra of type $I$ and
with characteristic sequence equal to $(n-p, p)$. Then it is isomorphic to one of the following non-isomorphic algebras:

\[A_1:\left\{\begin{array}{ll} e_i\circ e_j
=C_{i+j-1}^je_{i+j}, & 2\leq i+j\leq n-p,\\[1mm] e_i\circ f_j =
\sum\limits_{k=0}^{i-1}{C_{i+j-2-k}^{j-1}\beta_k}f_{i+j},  \ \
f_i\circ e_j=\sum\limits_{k=0}^j{C_{i+j-2-k}^{i-2}
\beta_k}f_{i+j}, & 2\leq i+j\leq p,
\end{array}\right. \]
 where $\beta_{i+1}=\prod\limits_{k=0}^{i}\frac{k+\beta_1}{k+1} \ \ \mbox{for} \ \ 1\leq i \leq p-2$
and $\beta_1\in \mathbb{C}$;

\[A_2:\left\{\begin{array}{lll} e_i\circ
e_j = C_{i+j-1}^je_{i+j}, & 2\leq i+j\leq n-p,& f_1\circ f_{p-1} =f_{p},\\[1mm]
e_i\circ f_j =
\sum\limits_{k=0}^{i-1}{C_{i+j-2-k}^{j-1}\beta_k}f_{i+j}, &
f_i\circ e_j=\sum\limits_{k=0}^j{C_{i+j-2-k}^{i-2}
\beta_k}f_{i+j}, & 2\leq i+j\leq p,\\[1mm]
\end{array}\right. \] where $\beta_{i}=(-1)^iC_{p-2}^i \ \mbox{for} \ \ 1\leq i \leq p-2$;

\[A_3:\left\{\begin{array}{ll}
e_i\circ e_j = C_{i+j-1}^je_{i+j}, & 2\leq i+j\leq n-p,\\[1mm]
e_i\circ f_j =f_i\circ e_j=f_i\circ f_j =C_{i+j-1}^{j}f_{i+j}, &
2\leq i+j\leq p.\end{array}\right.\]
\end{thm}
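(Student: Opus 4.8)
The plan is to leverage the structural results already established---Proposition~\ref{prop1}, the closed forms \eqref{eq3}--\eqref{eq4} for $e_i\circ f_j$ and $f_i\circ e_j$, and Propositions~\ref{prop5} and \ref{prop7}---which between them fix every product of $A$ in terms of the single parameter $\beta_1$ and a few structure constants, and then to run a case analysis on the value of $\beta_1$. Since $n\ge 2p+2$, in particular $\dim A\ge 2p+1$, so we are in case~(1) of Proposition~\ref{prop5}.

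Assume first $\beta_1\ne 1$. Proposition~\ref{prop5}(1) then gives $\gamma_i=0$ for $1\le i\le p-1$, $\delta_i=0$ for $1\le i\le p-2$, together with $(p-1+\beta_1)\gamma_p=0$ and $(p-2+\beta_1)\delta_{p-1}=0$; the inductive formulas for $f_i\circ f_1$, $f_2\circ f_j$ and then for $f_i\circ f_j$ obtained in the proof of Proposition~\ref{prop1} force the remaining $f\circ f$ products to vanish as well. Hence, whenever $\beta_1\notin\{2-p,\,1-p\}$ all of the $\gamma$'s and $\delta$'s are zero, $\alpha_1=0$ has already been arranged, and $A$ is precisely the algebra $A_1$ with parameter $\beta_1$. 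If $\beta_1=2-p$ then $p-2+\beta_1=0$, so the only surviving constant is $\delta_{p-1}$, yielding the extra product $f_1\circ f_{p-1}=\delta_{p-1}f_p$: for $\delta_{p-1}=0$ one again gets $A_1$ (whose $\beta_k$ indeed equal $(-1)^kC_{p-2}^k$), and for $\delta_{p-1}\ne 0$ the rescaling $e_1'=Ae_1$, $f_1'=Df_1$ with $D/A=\delta_{p-1}^{-1}$ normalizes $\delta_{p-1}=1$ and produces $A_2$.

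The value $\beta_1=1-p$ is where the hypothesis $n\ge 2p+2$ enters. Here $p-1+\beta_1=0$, so the surviving constant is $\gamma_p$, with $f_1\circ f_p=\gamma_pe_{p+1}$. Comparing the two computations of $(f_1\circ f_p)\circ e_1$: by the identity $(a\circ b)\circ c=(a\circ c)\circ b$ it equals $(f_1\circ e_1)\circ f_p=\beta_1(f_2\circ f_p)=(1-p)\gamma_pe_{p+2}$, while directly it equals $\gamma_p(e_{p+1}\circ e_1)=(p+1)\gamma_pe_{p+2}$, the basis vector $e_{p+2}$ existing exactly because $n-p\ge p+2$. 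Thus $2p\gamma_p=0$, so $\gamma_p=0$ and $A$ is again $A_1$ (with $\beta_1=1-p$). Finally, if $\beta_1=1$ then Proposition~\ref{prop5}(1) gives $\beta_i=1$, $\gamma_i=\gamma_1$, $\delta_i=\delta_1$; the substitution $f_1\mapsto f_1+te_1$ sends $(\gamma_1,\delta_1)$ to $(\gamma_1-t\delta_1-t^2,\ \delta_1+2t)$, so choosing $t$ with $t^2+\delta_1t-\gamma_1=0$ we may take $\gamma_1=0$. If $\delta_1=0$ all $f\circ f$ products vanish and $A\cong A_1$ (with $\beta_1=1$); if $\delta_1\ne 0$, then after rescaling to $\delta_1=1$ the recursion $(j+1)\,f_i\circ f_{j+1}=i\,f_{i+1}\circ f_j$ (a consequence of the Zinbiel identities together with $f_i\circ e_1=if_{i+1}$) and the base case $f_i\circ f_1=if_{i+1}$ give $f_i\circ f_j=C_{i+j-1}^jf_{i+j}$; since for $\beta_k=1$ the expressions \eqref{eq3}--\eqref{eq4} collapse to $C_{i+j-1}^jf_{i+j}$ by the Vandermonde/hockey-stick identity, this is exactly $A_3$.

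It remains to check that the listed tables are genuinely Zinbiel (immediate from the computations above) and pairwise non-isomorphic. For the latter one exhibits invariants: $\beta_1$ is preserved under every admissible change of basis because Proposition~\ref{prop7}, or a direct computation in the case $\beta_1=1$, forces $C=0$, after which $f_1'\circ e_1'=\beta_1\,e_1'\circ f_1'$; the algebra $A_2$ is separated from $A_1$ with $\beta_1=2-p$ by the non-vanishing of $\delta_{p-1}$, and $A_3$ from $A_1$ with $\beta_1=1$ by the existence of an element $x\in A\setminus A^2$ with $x\circ x=0$ (namely $x=f_1$), which fails in $A_3$; and no change of basis can create the extra $f\circ f$ products of $A_2$ or $A_3$ out of the zero ones of $A_1$. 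The main obstacle is the bookkeeping in the case $\beta_1=1$---both the normalization $\gamma_1=0$, $\delta_1=1$ and the verification that every $f_i\circ f_j$ takes the binomial form---and the precise use of $n\ge 2p+2$ in the case $\beta_1=1-p$ to force $\gamma_p=0$.
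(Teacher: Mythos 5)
Your overall strategy coincides with the paper's: reduce everything via Propositions~\ref{prop1}, \ref{prop5} and \ref{prop7} to the parameters $\beta_1$, $\gamma_p$, $\delta_{p-1}$, use $n\geq 2p+2$ to kill $\gamma_p$, normalize $\delta_{p-1}$, and split on $\beta_1=1$ versus $\beta_1\neq 1$. The treatment of $\beta_1\neq 1$ (including the subcase $\beta_1=1-p$, where your comparison of the two evaluations of $(f_1\circ f_p)\circ e_1$ gives $2p\gamma_p=0$) is sound, as is the discussion of non-isomorphy.

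The gap is in the case $\beta_1=1$. The substitution $f_1\mapsto f_1+te_1$ with $t\neq 0$ is not an admissible change of basis here: putting $f_i'=f_i+te_i$, one finds $e_1\circ f_p'=e_1\circ f_p+t\,e_1\circ e_p=te_{p+1}\neq 0$, the vector $e_{p+1}$ being nonzero precisely because $n-p\geq p+2$. Hence in the new basis $L_{e_1}$ no longer has the Jordan form $J_{n-p}\oplus J_p$, and the resulting table cannot be matched against $A_1$, $A_2$, $A_3$, all of which require $e_1\circ f_p=0$; this is exactly the obstruction that forces $C=0$ in the general change $f_1'=Ce_1+Df_1$, and your $t$ plays the role of $C$. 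So $\gamma_1$ cannot be normalized away by this move; it must instead be shown to vanish. It does: expanding $(f_1\circ f_p)\circ e_1$ by the Zinbiel identity gives $f_1\circ(f_p\circ e_1)+f_1\circ(e_1\circ f_p)=0$, since $e_1\circ f_p=0$ and $f_p\circ e_1=\alpha_1e_{p+1}=0$, while the direct evaluation gives $(p+1)\gamma_pe_{p+2}$; hence $\gamma_p=0$, and for $\beta_1=1$ Proposition~\ref{prop5} yields $\gamma_1=\gamma_p=0$. (Note that the identity $(f_1\circ f_p)\circ e_1=(f_1\circ e_1)\circ f_p$ that you use elsewhere is vacuous in this case, both sides being $(p+1)\gamma_1e_{p+2}$ when $\beta_1=1$.) This is how the paper disposes of $\gamma_p$, and once $\gamma_1=0$ is established the remainder of your $\beta_1=1$ analysis --- normalizing $\delta_1$ to $0$ or $1$ and obtaining $A_1$ or $A_3$ --- goes through and agrees with the paper.
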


\begin{proof} From Proposition~\ref{prop5} for $\beta_1\neq1$ we obtain a table of multiplication of the algebra:

\begin{align*}
e_i\circ e_j  &= C_{i+j-1}^je_{i+j}, \ &   & \qquad    \qquad  \qquad  \quad  \qquad 2\leq i+j\leq n-p,\\
f_1\circ f_i  & =\delta_if_{i+1},  \quad  1\leq i\leq
p-1,  & f_i\circ f_j & =\varphi(\delta_1, \delta_2, \dots,
\delta_s)f_{i+j},  \quad 2\leq i+j\leq p,\\
e_i\circ f_j & = \sum\limits_{k=0}^{i-1}{C_{i+j-2-k}^{j-1}\beta_k}f_{i+j},
 \ \  & f_i\circ e_j &=\sum\limits_{k=0}^j{C_{i+j-2-k}^{i-2}
\beta_k}f_{i+j}, \quad 2\leq i+j\leq p,
\end{align*}
where
$(p-1+\beta_1)\gamma_{p}=0, \ \ (p-2+\beta_1)\delta_{p-1}=0$.

Consider
\[(f_1\circ f_{p})\circ e_1 =f_1\circ (f_{p}\circ e_1)+f_1\circ (e_1\circ f_{p})
=0.\]

On the other hand,
\[(f_1\circ f_{p})\circ e_1=\gamma_pe_{p+1}\circ e_1 =(p+1)\gamma_pe_{p+2}.\]
Therefore, we deduce $\gamma_p=0$.

Applying Proposition~\ref{prop7} to the general change of basis, from the equalities
\begin{align*}
f_1^\prime\circ f_{p-1}^\prime &=(Df_1)\circ
(A^{p-2}Df_{p-1})=A^{p-2}D^2\delta_{p-1}f_p \\
&= \delta_{p-1}^\prime
f_{p}^\prime
=\delta_{p-1}^\prime(A^{p-1}D+A^{p-2}BD\delta_{p-1})f_p,
\end{align*}
we get
$\delta_{p-1}^\prime=\frac{D\delta_{p-1}}{A+B\delta_{p-1}}$.

If $\delta_{p-1}=0$ then $\delta_{p-1}^\prime=0$, and we obtain the algebra $A_1$.

If $\delta_{p-1}\neq0$ then by choosing $D=\frac{A+B\delta_{p-1}}{\delta_{p-1}}$ and from $(p-2+\beta_1)\delta_{p-1}=0$, we have $\delta_{p-1}^\prime=1, \
\beta_1=2-p$, that is, we have the algebra $A_2$.

In case of $\beta_1=1$ we have $\delta_i=\delta_1, \ \ 1 \leq i\leq
p-1$. Putting $D=\frac{A+B\delta_{1}}{\delta_{1}}$ we obtain
$\delta^\prime_{1} =1$. Consequently, $f_1\circ f_i =f_{i+1}$ for $1\leq i\leq p-1$. From Lemma~\ref{lem1} we deduce $f_i\circ f_j =C_{i+j-1}^{j}f_{i+j}, \ \ 2\leq i+j\leq p$. Thus, we get the algebra $A_3$.
\end{proof}

In the following theorem the classification for $n=2p+1$ is presented.

\begin{thm} \label{thm3.8} Let $A$ be a Zinbiel algebra of type $I$ and with characteristic sequence equal to $(p+1, p)$. Then it is isomorphic to one of the following non-isomorphic algebras:

\[A_4:\left\{\begin{array}{lll}
e_i\circ e_j = C_{i+j-1}^je_{i+j}, & 2\leq i+j\leq p+1, &f_1\circ f_{p} =e_{p+1},\\[1mm]
e_i\circ f_j
=\sum\limits_{k=0}^{i-1}{C_{i+j-2-k}^{j-1}\beta_k}f_{i+j}, &
f_i\circ e_j=\sum\limits_{k=0}^j{C_{i+j-2-k}^{i-2}
\beta_k}f_{i+j},  & 2\leq i+j\leq p, \\[1mm]
 \end{array}\right.\]
where $\beta_0=1, \ \ \beta_{i}=(-1)^iC_{p-1}^i$ for $1\leq i \leq p-1$;

\[A_5:\left\{\begin{array}{ll}
e_i\circ e_j = C_{i+j-1}^je_{i+j}, & 2\leq i+j\leq p+1,\\[1mm]
e_i\circ f_j =
\sum\limits_{k=0}^{i-1}{C_{i+j-2-k}^{j-1}\beta_k}f_{i+j}, \
f_i\circ e_j=\sum\limits_{k=0}^j{C_{i+j-2-k}^{i-2}
\beta_k}f_{i+j},  & 2\leq i+j\leq p, \end{array}\right.\]
where $\beta_0=1, \ \ \beta_{i+1}=\prod\limits_{k=0}^{i}\frac{k+\beta_1}{k+1}$ for $1\leq i \leq p-2$ and $\beta_1\in \mathbb{C}$;

\[A_6:\left\{\begin{array}{lll}
e_i\circ e_j = C_{i+j-1}^je_{i+j}, & 2\leq i+j\leq p+1, & f_1\circ f_{p-1} = f_{p},\\[1mm]
e_i\circ f_j =
\sum\limits_{k=0}^{i-1}{C_{i+j-2-k}^{j-1}\beta_k}f_{i+j}, &
f_i\circ e_j=\sum\limits_{k=0}^j{C_{i+j-2-k}^{i-2}
\beta_k}f_{i+j}, & 2\leq i+j\leq p,\\[1mm]
 \end{array}\right.\]
where $\beta_0=1, \ \ \beta_{i}=(-1)^iC_{p-2}^i$ for $1\leq i \leq p-2$ and $\beta_{p-1}=0$;

\[A_7:\left\{\begin{array}{ll}
e_i\circ e_j = C_{i+j-1}^je_{i+j},  &  2\leq i+j\leq p+1,\\[1mm]
e_i\circ f_j = f_i\circ e_j=C_{i+j-1}^{j}f_{i+j},  & 2\leq i+j\leq
p,\\[1mm]
f_i\circ f_{j}
=\gamma_{1}C_{i+j-1}^{j}e_{i+j}+\delta_1C_{i+j-1}^{j}f_{i+j}, &
2\leq i+j\leq p,\\[1mm]
f_i\circ f_{j} = \gamma_{1}C_{i+j-1}^{j}e_{p+1}, &  i+j=
p+1,\end{array}\right.\]
where $\gamma_1, \delta_1\in \mathbb{C}$.
\end{thm}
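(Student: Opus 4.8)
The plan is to continue the analysis behind Theorem~\ref{thm3.7}, i.e.\ that of Propositions~\ref{prop1}, \ref{prop5} and \ref{prop7}, specialized to the boundary dimension $n=2p+1$, where $n-p=p+1$. Since $\dim A=2p+1\geq 2p+1$, the case $\dim A\geq 2p+1$ of Proposition~\ref{prop5} applies. The decisive new point is that here $e_{p+2}=0$, so the step that in the proof of Theorem~\ref{thm3.7} annihilated $\gamma_p$, namely
\[(f_1\circ f_p)\circ e_1 = \gamma_p\,e_{p+1}\circ e_1 = (p+1)\gamma_p\,e_{p+2}=0,\]
is now vacuous; consequently $\gamma_p$ — and, when $\beta_1=1$ and hence $\gamma_p=\gamma_1$, also $\gamma_1$ — survives. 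This is exactly what produces the extra algebra $A_4$ and the parameter $\gamma_1$ in $A_7$.

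Assume first $\beta_1\neq 1$. Then the case $\dim A\geq 2p+1$ of Proposition~\ref{prop5} gives $\gamma_i=0$ ($1\leq i\leq p-1$), $\delta_i=0$ ($1\leq i\leq p-2$), $(p-1+\beta_1)\gamma_p=0$ and $(p-2+\beta_1)\delta_{p-1}=0$; hence $f_1\circ f_i=0$ for $i\leq p-2$, $f_1\circ f_{p-1}=\delta_{p-1}f_p$ and $f_1\circ f_p=\gamma_p e_{p+1}$. Propagating these through the inductive identities of Proposition~\ref{prop1} shows that these are the only nonzero products of the form $f_i\circ f_j$, while \eqref{eq3}--\eqref{eq4} fix $e_i\circ f_j$ and $f_i\circ e_j$. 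Now $(p-1+\beta_1)\gamma_p=0$ forces $\beta_1=1-p$ if $\gamma_p\neq 0$, and $(p-2+\beta_1)\delta_{p-1}=0$ forces $\beta_1=2-p$ if $\delta_{p-1}\neq 0$; since $1-p\neq 2-p$, at most one of $\gamma_p,\delta_{p-1}$ is nonzero, leaving three subcases. If $\gamma_p=\delta_{p-1}=0$ one obtains $A_5$, the literal $n-p=p+1$ analogue of $A_1$, with free $\beta_1$. If $\gamma_p\neq 0$, then $\beta_1=1-p$ and $\delta_{p-1}=0$; applying Proposition~\ref{prop7} together with the grading scaling $f_1\mapsto\mu f_1$ to normalize $\gamma_p=1$ yields $A_4$, in which $\beta_{i+1}=\prod_{k=0}^{i}\frac{k+\beta_1}{k+1}$ with $\beta_1=1-p$ evaluates to $\beta_i=(-1)^iC_{p-1}^i$. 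If $\delta_{p-1}\neq 0$, then $\beta_1=2-p$ and $\gamma_p=0$; normalizing $\delta_{p-1}=1$ exactly as in the derivation of $A_2$ yields $A_6$, where $\beta_i=(-1)^iC_{p-2}^i$ and $\beta_{p-1}=C_{p-2}^{p-1}=0$.

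Assume next $\beta_1=1$. Then the same part of Proposition~\ref{prop5} gives $\beta_i=1$ ($1\leq i\leq p-1$), $\gamma_i=\gamma_1$ ($1\leq i\leq p$) and $\delta_i=\delta_1$ ($1\leq i\leq p-1$), so $f_1\circ f_p=\gamma_1 e_{p+1}$ with $\gamma_1$ unconstrained. Since $\beta_k=1$ for $0\leq k\leq p-1$, the identity $C_n^{m-1}+C_n^m=C_{n+1}^m$ collapses \eqref{eq3}--\eqref{eq4} to $e_i\circ f_j=f_i\circ e_j=C_{i+j-1}^j f_{i+j}$, and an induction (or Lemma~\ref{lem1} applied to the chain $f_1,\dots,f_p$) gives $f_i\circ f_j=\gamma_1 C_{i+j-1}^j e_{i+j}+\delta_1 C_{i+j-1}^j f_{i+j}$ for $2\leq i+j\leq p$ and $f_i\circ f_j=\gamma_1 C_{i+j-1}^j e_{p+1}$ for $i+j=p+1$; this is $A_7$. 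Since the hypothesis $\beta_1\neq 1$ of Proposition~\ref{prop7} now fails, one has to re-examine the admissible changes of basis $e_1'=Ae_1+Bf_1$, $f_1'=Ce_1+Df_1$: computing $e_2'=e_1'\circ e_1'$, $f_2'=e_1'\circ f_1'$ and $f_1'\circ f_1'$ and comparing coefficients at $e_2,f_2$ determines the induced transformation of $(\gamma_1,\delta_1)$ and yields the family $A_7$.

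It remains to prove pairwise non-isomorphism of $A_4,\dots,A_7$. This follows from two invariants: the value of $\beta_1$ (equal to $1-p$, arbitrary, $2-p$, $1$ for $A_4,A_5,A_6,A_7$ respectively), which after the normalization $\alpha_1=0$ is read off from the relation $f_1\circ e_1=\beta_1(e_1\circ f_1)$ and is preserved by the base changes under consideration; and whether some product $f_i\circ f_j$ is nonzero and, if so, whether it lies in $\langle e_{p+1}\rangle$ or in the $f$-part — this separates $A_4$ and the $\gamma_1\neq 0$ members of $A_7$ from $A_5$, and $A_6$ from $A_5$ — together with the routine check that distinct parameter values give non-isomorphic members of $A_5$ and of $A_7$. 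The step I expect to be the main obstacle is precisely this $\beta_1=1$ bookkeeping: because $\gamma_1$ is no longer killed by the $L_{e_1}$-geometry (as it was when $n\geq 2p+2$), one must carefully track the transformation of $(\gamma_1,\delta_1)$ under general base change to confirm both that $A_7$ does not collapse and that its members are distinct from $A_4$, $A_5$ and $A_6$; everything else is a lengthy but routine continuation of the inductions already carried out in Propositions~\ref{prop1}, \ref{prop5} and \ref{prop7}.
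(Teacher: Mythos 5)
Your proposal is correct and follows essentially the same route as the paper's proof: it invokes the $\dim A\geq 2p+1$ case of Proposition~\ref{prop5}, observes that the relation $(f_1\circ f_p)\circ e_1=(p+1)\gamma_p e_{p+2}$ no longer kills $\gamma_p$ because $e_{p+2}=0$, splits on $\beta_1=1$ versus $\beta_1\neq 1$ and then on which of $\gamma_p$, $\delta_{p-1}$ is nonzero, and normalizes the surviving constant via the base changes of Proposition~\ref{prop7}, exactly as in the paper. Your two additions --- the explicit remark that $1-p\neq 2-p$ forces at most one of $\gamma_p,\delta_{p-1}$ to be nonzero, and the sketch of pairwise non-isomorphism via the invariant $\beta_1$ and the shape of the $f_i\circ f_j$ products --- are refinements of points the paper leaves implicit rather than a different method.
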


\begin{proof} From Proposition~\ref{prop5} for
$\beta_1\neq1$ we obtain a table of multiplication of $A$:
\begin{align*}
e_i\circ e_j &= C_{i+j-1}^je_{i+j}, \ & &  \qquad \qquad \qquad \ \qquad \qquad 2\leq i+j\leq p+1,\\
 f_1\circ f_{p-1} &=\delta_{p-1}f_{p}, \ \ &  f_1\circ f_{p}
&=\gamma_{p}e_{p+1},\\
e_i\circ f_j &= \sum\limits_{k=0}^{i-1}{C_{i+j-2-k}^{j-1}\beta_k}f_{i+j},
\ \ &  f_i\circ e_j& =\sum\limits_{k=0}^j{C_{i+j-2-k}^{i-2}
\beta_k}f_{i+j},  \ \ 2\leq i+j\leq p,
\end{align*}
where $\beta_0=1, \ (p-1+\beta_1)\gamma_{p}=0$ and $(p-2+\beta_1)\delta_{p-1}=0$.

Consider the general change of basis as above. Then from Proposition~\ref{prop7}, we have
\[\left\{\begin{array}{l}
e_p^\prime=A^pe_p+(A^{p-1}B\sum\limits_{k=0}^{p-1}\beta_{k}+
A^{p-2}B^2\sum\limits_{k=0}^{p-2}\beta_{k}\delta_{p-1})f_p,\\[1mm]
e_{p+1}^\prime=(A^{p+1}+(A^{p-1}B^2\sum\limits_{k=0}^{p-1}\beta_{k}+
A^{p-2}B^3\sum\limits_{k=0}^{p-2}\beta_{k}\delta_{p-1})\gamma_p)e_{p+1},\\[1mm]
f_p^\prime=(A^{p-1}D+A^{p-2}BD\delta_{p-1})f_p,\\[1mm]
\end{array}\right. \]

The equality $e_1^\prime\circ f_p^\prime = 0$ in the new basis implies $B\gamma_p= 0$.

\textbf{Case 1.} Let $\gamma_{p}\neq0$. Then $B=0$ and $\beta_1=1-p$, $\delta_{p-1}=0$.

Considering the equality $f_1^\prime\circ f_{p}^\prime=\gamma_{p}^\prime
e_{p+1}^\prime$, we derive $A^2\gamma_p^\prime=D^2\gamma_p$.

Setting $D=\frac{A}{\sqrt{\gamma_p}}$, we obtain $\gamma_p^\prime=1$. Thus, we get the algebra $A_4$.

\textbf{Case 2.} Let $\gamma_{p}=0$. Then considering the equality $f_1^\prime\circ f_{p-1}^\prime= \delta_{p-1}^\prime f_{p}^\prime$, we deduce
$\delta_{p-1}^\prime=\frac{D\delta_{p-1}}{A+B\delta_{p-1}}$.

If $\delta_{p-1}=0$ then $\delta_{p-1}^\prime=0$, that is, we obtain  the algebra $A_5$.

If $\delta_{p-1}\neq0$ then $\beta_1=2-p$ and putting $D=\frac{A+B\delta_{p-1}}{\delta_{p-1}}$, we get $\delta_{p-1}^\prime=1$ and the algebra $A_6$.

Now we consider case $\beta_1=1$. Using Proposition~\ref{prop5}, we obtain the algebra $A_7$.
\end{proof}

Below, we present the classification of Zinbiel algebras with characteristic sequence equal to $C(A)=(p, p)$.

\begin{thm} Let $A$ be a Zinbiel algebra with characteristic sequence $(p, p)$. Then it is isomorphic to one of the following non-isomorphic algebras:

\[A_8:\left\{\begin{array}{ll}
e_i\circ e_j = C_{i+j-1}^je_{i+j}, & 2\leq i+j\leq p,\\[1mm]
e_i\circ f_j =
\sum\limits_{k=0}^{i-1}{C_{i+j-2-k}^{j-1}\beta_k}f_{i+j},\ \
f_i\circ e_j=\sum\limits_{k=0}^j{C_{i+j-2-k}^{i-2}
\beta_k}f_{i+j},  & 2\leq i+j\leq p, \end{array}\right.\]
where $\beta_0=1, \ \ \beta_{i+1}=\prod\limits_{k=0}^{i}\frac{k+\beta_1}{k+1}$ for $1\leq i \leq p-2$ and $\beta_1\in \mathbb{C}$;

\[A_9:\left\{\begin{array}{lll}
e_i\circ e_j = C_{i+j-1}^je_{i+j}, & 2\leq i+j\leq p, & f_1\circ f_{p-1} = f_{p},\\[1mm]
e_i\circ f_j =
\sum\limits_{k=0}^{i-1}{C_{i+j-2-k}^{j-1}\beta_k}f_{i+j}, &
f_i\circ e_j=\sum\limits_{k=0}^j{C_{i+j-2-k}^{i-2}
\beta_k}f_{i+j},  & 2\leq i+j\leq p,\\[1mm]
  \end{array}\right.\]
where $\beta_0=1, \ \ \beta_{i}=(-1)^iC_{p-2}^i$ for $1\leq i \leq p-2$ and $\beta_{p-1}=0$;

\[A_{10}:\left\{\begin{array}{lll}
e_i\circ e_j = C_{i+j-1}^je_{i+j}, & 2\leq i+j\leq p,&f_1\circ f_{p-1} = e_p+\delta_{p-1}f_{p},\\[1mm]
e_i\circ f_j =
\sum\limits_{k=0}^{i-1}{C_{i+j-2-k}^{j-1}\beta_k}f_{i+j}, &
f_i\circ e_j=\sum\limits_{k=0}^j{C_{i+j-2-k}^{i-2}
\beta_k}f_{i+j},  & 2\leq i+j\leq p,\\[1mm]
 \end{array}\right. \]
where $\beta_0=1, \ \ \beta_{i}=(-1)^iC_{p-2}^i$ for $1\leq i \leq p-2, \ \beta_{p-1}=0$ and $\delta_{p-1}\in
 \mathbb{C}$;

\[A_{11}:  \ e_i\circ e_j = C_{i+j-1}^je_{i+j},  \ \ e_i\circ f_j = f_i\circ e_j=C_{i+j-1}^{j}f_{i+j},   \ \ 2\leq i+j\leq p.\]

\[A_{12}: \left\{\begin{array}{ll} e_i\circ e_j = C_{i+j-1}^je_{i+j}, & 2\leq i+j\leq p,\\[1mm]
e_i\circ f_j = f_i\circ e_j=f_i\circ f_{j} = C_{i+j-1}^{j}f_{i+j},
&2\leq i+j\leq p. \end{array}\right.\]
\end{thm}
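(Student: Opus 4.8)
The plan is to rerun, now with $\dim A=2p$, the analysis that yielded Theorems~\ref{thm3.7} and~\ref{thm3.8}. When $C(A)=(p,p)$ the two Jordan blocks of $L_{e_1}$ have equal size, so the distinction between types $I$ and $II$ disappears and $A$ may be treated as an algebra of type $I$ with $n-p=p\ge 3$; in particular Proposition~\ref{prop1}, the product formulas \eqref{eq3}--\eqref{eq4}, Proposition~\ref{prop5} in the case $\dim A=2p$, and Proposition~\ref{prop7} are all available, starting from a basis $\{e_1,\dots,e_p,f_1,\dots,f_p\}$ satisfying \eqref{eq1}--\eqref{eq2}. As before, the argument splits according to whether $\beta_1\neq 1$ or $\beta_1=1$.

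Assume first $\beta_1\neq 1$. Proposition~\ref{prop5} (case $\dim A=2p$) gives $\gamma_i=\delta_i=0$ for $1\le i\le p-2$ together with $(p-2+\beta_1)\gamma_{p-1}=(p-2+\beta_1)\delta_{p-1}=0$, so the only product among the $f$'s that can be nonzero is $f_1\circ f_{p-1}=\gamma_{p-1}e_p+\delta_{p-1}f_p$: indeed each $f_i\circ f_j$ with $i\ge 2$ reduces, via $f_i=e_1\circ f_{i-1}$ and the Zinbiel identity, to products $f_1\circ f_k$ and $f_k\circ f_1$ with $k\le p-2$, all of which vanish. Together with \eqref{eq3}--\eqref{eq4} (where $\beta_{i+1}=\prod_{k=0}^{i}\frac{k+\beta_1}{k+1}$) this determines the multiplication. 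If $\gamma_{p-1}=\delta_{p-1}=0$ we obtain $A_8$. Otherwise $p-2+\beta_1=0$, hence $\beta_1=2-p$, $\beta_i=(-1)^iC_{p-2}^i$ and $\beta_{p-1}=0$. Applying the change of basis $e_1'=Ae_1+Bf_1$, $f_1'=Df_1$ (Proposition~\ref{prop7} forces $C=0$) and extending its formulas to degree $p$ through $e_p'=e_1'\circ e_{p-1}'$, $f_p'=e_1'\circ f_{p-1}'$, one computes $f_1'\circ f_{p-1}'=\gamma_{p-1}'e_p'+\delta_{p-1}'f_p'$ with $\gamma_{p-1}'=\frac{D^2\gamma_{p-1}}{A(A+B\delta_{p-1})}$ and $\delta_{p-1}'=\frac{D\delta_{p-1}}{A+B\delta_{p-1}}$. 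If $\gamma_{p-1}=0$ and $\delta_{p-1}\neq 0$, the choice $D=\frac{A+B\delta_{p-1}}{\delta_{p-1}}$ gives $\delta_{p-1}'=1$: this is $A_9$. If $\gamma_{p-1}\neq 0$, normalizing $\gamma_{p-1}'=1$ leaves the constant $\delta_{p-1}$: this is $A_{10}$.

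Assume now $\beta_1=1$. Proposition~\ref{prop5} gives $\beta_i=1$, $\gamma_i=\gamma_1$, $\delta_i=\delta_1$ for all admissible $i$. Substituting $\beta_k=1$ in \eqref{eq3}--\eqref{eq4} and using the identity $\sum_{m=j-1}^{i+j-2}C_m^{j-1}=C_{i+j-1}^{j}$ we get $e_i\circ f_j=f_i\circ e_j=C_{i+j-1}^{j}f_{i+j}$ for $2\le i+j\le p$, and a short induction on $i$ (from $f_{i+1}=e_1\circ f_i$ and the Zinbiel identity) yields $f_i\circ f_j=C_{i+j-1}^{j}(\gamma_1 e_{i+j}+\delta_1 f_{i+j})$ for $2\le i+j\le p$; thus the algebra depends only on $(\gamma_1,\delta_1)\in\mathbb{C}^2$. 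Because here the $e$- and $f$-chains have the same length, the change $e_1'=e_1$, $f_1'=Ce_1+Df_1$ with $D\neq 0$ is admissible and sends $(\gamma_1,\delta_1)$ to $(D^2\gamma_1-C^2-CD\delta_1,\;2C+D\delta_1)$. Over $\mathbb{C}$ we first pick $C/D$ a root of $D^2\gamma_1-C^2-CD\delta_1=0$, reaching $\gamma_1=0$; then, according as $\delta_1^2+4\gamma_1$ is zero or not, we arrive at $(0,0)$, the algebra $A_{11}$, or at $(0,\delta_1)$ with $\delta_1\neq 0$, which rescales to $(0,1)$, the algebra $A_{12}$. The normal forms $(0,0)$ and $(0,1)$ are inequivalent.

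It remains to check that each listed table defines a Zinbiel algebra (a direct verification, using Lemma~\ref{lem2} for the $\beta$-dependent products) and that $A_8,\dots,A_{12}$ are pairwise non-isomorphic. For the latter: the scalars $\beta_i$ are isomorphism invariants, since the formulas of Proposition~\ref{prop7} give $f_1'\circ e_i'=\beta_i f_{i+1}'$, so distinct values of $\beta_1$ separate the members of $A_8$ and separate $A_8$ from $A_9,A_{10}$; whether $f_1\circ f_{p-1}$ is zero, lies in $\langle f_p\rangle$, or has a nonzero $e_p$-component separates $A_8,A_9,A_{10}$; and the presence or absence of nonzero products $f_i\circ f_j$ separates $A_{12}$ from $A_{11}$ and both from the $\beta_1\neq 1$ algebras. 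I expect the delicate point to be the bookkeeping in the sub-case $\beta_1\neq 1$, $\gamma_{p-1}\neq 0$: because $\beta_1=2-p$ forces $\sum_{k=0}^{p-1}\beta_k=0$, several of the $e_p$- and $f_p$-corrections appearing in $e_p'$ and $f_p'$ cancel, and one must push the computation through carefully to obtain the transformation law of $(\gamma_{p-1},\delta_{p-1})$ and thereby isolate $A_{10}$.
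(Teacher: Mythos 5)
Your overall route is the same as the paper's: the paper's ``proof'' of this theorem is a single sentence deferring to the methods of Theorems~\ref{thm3.7} and~\ref{thm3.8}, and you carry out exactly that program (Proposition~\ref{prop5} in the case $\dim A=2p$, a split on $\beta_1\neq 1$ versus $\beta_1=1$, then normalization via Proposition~\ref{prop7}). Most of the execution is sound: the reduction of all $f\circ f$ products to $f_1\circ f_{p-1}$, the derivation of $A_8$ and $A_9$, and the $\beta_1=1$ discussion leading to $A_{11}$ and $A_{12}$ via the discriminant $\delta_1^2+4\gamma_1$ are correct (and that last part is genuinely more careful than a verbatim transfer of the earlier proofs, since for $n=2p$ the substitution $f_1'=Ce_1+Df_1$ with $C\neq0$ really is admissible when $\beta_1=1$).

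There is, however, a genuine gap at exactly the point you flag as delicate, and it affects the conclusion. Your transformation law
\[
\gamma_{p-1}'=\frac{D^2\gamma_{p-1}}{A(A+B\delta_{p-1})},\qquad
\delta_{p-1}'=\frac{D\delta_{p-1}}{A+B\delta_{p-1}},
\]
which is correct (the corrections in $e_p'$ and $f_p'$ do cancel because $\sum_{k=0}^{p-2}(-1)^kC_{p-2}^k=0$ and $\beta_{p-1}=0$), contradicts your assertion that ``normalizing $\gamma_{p-1}'=1$ leaves the constant $\delta_{p-1}$.'' Writing $u=D/(A+B\delta_{p-1})$ and $v=D/A$, the law reads $\gamma'=uv\gamma$ and $\delta'=u\delta$, and when $\delta_{p-1}\neq0$ the parameters $u,v$ can be prescribed independently (choose $A$, then $D=vA$, then $B=A(v/u-1)/\delta_{p-1}$). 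Hence every algebra with $\gamma_{p-1}\neq0$ and $\delta_{p-1}\neq0$ is isomorphic to the one with $(\gamma_{p-1},\delta_{p-1})=(1,1)$; only the dichotomy $\delta_{p-1}=0$ versus $\delta_{p-1}\neq0$ is invariant. For $p=3$ one checks directly that $e_1'=4e_1-3f_1$, $f_1'=2f_1$ carries $(\gamma_2,\delta_2)=(1,1)$ to $(1,2)$. So the one-parameter family $A_{10}$ contains pairwise isomorphic members and should collapse to the two representatives $\delta_{p-1}\in\{0,1\}$; your argument neither establishes $\delta_{p-1}\in\mathbb{C}$ as an invariant nor proves the pairwise non-isomorphism claimed in the theorem for that family. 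Since the paper gives no proof to compare against and its own statement carries the same defect, you should either push the normalization through and correct the list of representatives, or explicitly record the discrepancy rather than assert a conclusion that your own formulas refute.
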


\begin{proof} The proof of this theorem is carried out by applying the methods and arguments as in the proof of Theorems~\ref{thm3.7} and \ref{thm3.8}.
\end{proof}

\subsection{Classification of Zinbiel algebras of type $II$.}

Consider a Zinbiel algebra of type $II$. From the condition on the operator $L_{e_1}$ we have the existence of a basis $\{e_1, e_2, \dots, e_p, f_1, f_2, \dots, f_{n-p} \}$ such that the products involving $e_1$ on the left side have the form:
\[e_1\circ e_i = e_{i+1}, \quad 1\leq i\leq
p-1.\]
Applying Lemma~\ref{lem1}, we get
\[\begin{array}{lll}
  e_i\circ e_j = C_{i+j-1}^je_{i+j}, & 2\leq i+j\leq p, & e_1\circ e_p = 0,\\
   e_1\circ f_i =f_{i+1}, & 1\leq i\leq n-p-1,&e_1\circ f_{n-p} = 0. \\
\end{array}\]

It is easy to see that \[A_1 =\langle e_1, f_1\rangle, \ A_2
=\langle e_2, f_2\rangle, \dots, \ A_p =\langle e_p, f_p\rangle,
\ A_{p+1} =\langle f_{p+1}\rangle,  \dots, \ A_{n-p}=\langle
f_{n-p}\rangle.\]

Let us introduce notations:
\[\begin{array}{lll}
 f_1\circ e_i = \alpha_ie_{i+1}+\beta_if_{i+1}, & 1\leq i \leq
p-1,& f_1\circ e_p = \beta_pf_{p+1},\\
 f_1\circ f_i =\gamma_ie_{i+1}+\delta_if_{i+1}, &  1\leq i\leq
p-1,&\\
 f_1\circ f_i =\delta_if_{i+1}, & p\leq i\leq n-p-1, & f_1\circ
f_{n-p}=0.\\
\end{array}\]

The following proposition can be proved similar to Proposition~\ref{prop1}.
\begin{prop} Let $A$ be a Zinbiel algebra of type $II$. Then for structural constants $\alpha_i, \beta_i,  \gamma_i$ and $\delta_i$ the following restrictions hold:
\[\left\{\begin{array}{ll}
\alpha_{i+1}=0 & 1\leq i \leq
p-2,\\[1mm]
\beta_{i+1}=\prod\limits_{k=0}^{i}\frac{k+\beta_1}{k+1} & 1\leq i \leq p-1,\\[1mm]
(i+1)\gamma_i=\beta_1\left(2\gamma_1+\sum\limits_{k=2}^{i}\gamma_k\right), & 1\leq i\leq p-2,\\[1mm]
(i+\beta_1)\delta_i=\beta_1\left(2\delta_1+\sum\limits_{k=2}^{i}\delta_k\right),
& 1\leq i\leq n-p-2.
\end{array}\right.\]
\end{prop}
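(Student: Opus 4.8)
The plan is to run through the argument of Proposition~\ref{prop1} essentially verbatim, keeping in mind that now the roles of $p$ and $n-p$ are interchanged: in type $II$ the $e$-chain has length $p$ and the $f$-chain has length $n-p\geq p$, so the index ranges that appeared there get swapped here. First I would compute the auxiliary products that feed the induction. Using the Zinbiel identity together with its consequence $(a\circ b)\circ c=(a\circ c)\circ b$ and the relations $e_1\circ e_{i-1}=e_i$, $e_1\circ f_{i-1}=f_i$, one obtains by induction (exactly as in the proof of Proposition~\ref{prop1}) formulas of the shape
\[
f_i\circ e_1=\alpha_1 e_{i+1}+(i-1+\beta_1)f_{i+1},\qquad e_i\circ f_1=\sum_{k=1}^{i-1}\alpha_k e_{i+1}+\sum_{k=0}^{i-1}\beta_k f_{i+1},
\]
and likewise for $f_i\circ f_1$, $f_2\circ e_i$, $f_2\circ f_i$; the $e$-components are simply dropped once the relevant index exceeds $p$, which is where the first difference from the type $I$ situation shows up.

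Next I would dispose of $\alpha_1$. If $\beta_1=1$, comparing $(f_1\circ f_1)\circ e_1$ with $(f_1\circ e_1)\circ f_1$ yields the relation $(1-\beta_1)\delta_1=\alpha_1(1+\beta_1)$, forcing $\alpha_1=0$. If $\beta_1\neq 1$, the change of basis $e_1^\prime=e_1$, $f_1^\prime=\frac{\alpha_1}{\beta_1-1}e_1+f_1$ normalizes $\alpha_1^\prime=0$; one must check (and it is immediate) that this change preserves the assumed triangular form of all the products $f_1\circ e_i$, $f_1\circ f_i$ and leaves $\beta_1$ unchanged. With $\alpha_1=0$ in hand, expanding $f_1\circ e_{i+1}=f_1\circ(e_1\circ e_i)$ by the Zinbiel identity and using $e_i\circ e_1=i\,e_{i+1}$ gives $(i+1)\,f_1\circ e_{i+1}=\beta_1\,(f_2\circ e_i)$. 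Substituting the formula for $f_2\circ e_i$ and comparing coefficients of $e_{i+2}$ and $f_{i+2}$, an induction on $i$ produces $\alpha_{i+1}=0$ for $1\leq i\leq p-2$ (the range in which $e_{i+2}$ is still a basis vector) and $\beta_{i+1}=\prod_{k=0}^{i}\frac{k+\beta_1}{k+1}$ for $1\leq i\leq p-1$ (the range in which $f_{i+2}$ exists).

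Finally, the relations on $\gamma_i$ and $\delta_i$ follow by evaluating $(f_1\circ f_i)\circ e_1$ in two ways: directly from right-commutativity, $(f_1\circ f_i)\circ e_1=(f_1\circ e_1)\circ f_i=\beta_1\,(f_2\circ f_i)$, and from the defining expansion $(f_1\circ f_i)\circ e_1=(\gamma_i e_{i+1}+\delta_i f_{i+1})\circ e_1=(i+1)\gamma_i e_{i+2}+(i+\beta_1)\delta_i f_{i+2}$. Equating the coefficients of $e_{i+2}$ and of $f_{i+2}$ and inserting the already computed expression for $f_2\circ f_i$ gives $(i+1)\gamma_i=\beta_1\bigl(2\gamma_1+\sum_{k=2}^{i}\gamma_k\bigr)$ for $1\leq i\leq p-2$ and $(i+\beta_1)\delta_i=\beta_1\bigl(2\delta_1+\sum_{k=2}^{i}\delta_k\bigr)$ for $1\leq i\leq n-p-2$, the $\delta$-range being the longer one precisely because the $f$-chain is longer. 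The only genuine point of care — and the main obstacle — is the bookkeeping of these index ranges: since $p\leq n-p$ one has to keep track separately of when a product still has an $e$-component (index $\leq p$) and when only the $f$-component survives, which is the mirror image of the situation in Proposition~\ref{prop1}.
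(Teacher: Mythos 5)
Your proposal is correct and follows exactly the route the paper intends: the paper gives no separate proof here, stating only that the result "can be proved similar to Proposition~\ref{prop1}", and your argument is precisely that adaptation, with the right bookkeeping of where the $e$-components die (index $>p$) versus where the $f$-chain continues (index up to $n-p$), which is what produces the asymmetric ranges $1\leq i\leq p-2$ for $\alpha,\gamma$ and $1\leq i\leq p-1$, $1\leq i\leq n-p-2$ for $\beta,\delta$.
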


\begin{prop} Let $A$ be a Zinbiel algebra of type $II$.
Then the following expressions hold:

\begin{equation}\label{eq7}e_i\circ
f_j=\sum\limits_{k=0}^{i-1}{C_{i+j-2-k}^{j-1}\beta_k}f_{i+j}, \
\mbox{при}  \ \ 1\leq i\leq  p,  \ \ p+1\leq i+j\leq n-p,
\end{equation}
\begin{equation}\label{eq8}f_i\circ e_j=\sum\limits_{k=0}^j{C_{i+j-2-k}^{i-2}\beta_k}f_{i+j},
\ \ \mbox{при} \ \ 1\leq j\leq p,  \ \ p+1\leq i+j\leq n-p,
\end{equation} where $\beta_0=1$.
\end{prop}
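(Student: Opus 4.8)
The plan is to establish \eqref{eq7} and \eqref{eq8} by the same double induction used for \eqref{eq3}--\eqref{eq4}, the only genuinely new feature being the behaviour of the products past the index $p$. As a preliminary remark, exactly as in the proof of Proposition~\ref{prop1} one first normalizes $\alpha_1=0$, so that \emph{all} the coefficients $\alpha_i$ vanish and $f_1\circ e_j=\beta_jf_{j+1}$ for $1\leq j\leq p$; moreover, since the algebra $A$ contains the vectors $e_1,\dots,e_p$ only, in the range $i+j\geq p+1$ every product of the form $e_i\circ f_j$, $f_i\circ e_j$ or $f_i\circ f_j$ automatically has no $e$-component, which is precisely what makes \eqref{eq7}--\eqref{eq8} depend on the $\beta_k$ alone (the $\gamma_i$ and $\delta_i$ play no role here).

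I would first settle the base cases. For $i=1$ the relation $e_1\circ f_j=f_{j+1}$ together with $C_{j-1}^{j-1}\beta_0=1$ gives \eqref{eq7}, while $f_1\circ e_j=\beta_jf_{j+1}$ gives \eqref{eq8} once one records that $C_{j-1-k}^{-1}$ equals $1$ for $k=j$ and $0$ otherwise. The case $j=1$ (where, in the tail range, the only instance is $i=p$) is obtained from the chain
\[e_i\circ f_1=(e_1\circ e_{i-1})\circ f_1=e_1\circ(e_{i-1}\circ f_1)+e_1\circ(f_1\circ e_{i-1}),\]
and the symmetric identity for $f_i\circ e_1$, feeding in the formulas already valid for $2\leq i+j\leq p$. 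Those last formulas are proved by the verbatim computation of \eqref{eq3}--\eqref{eq4}, since in that range the multiplication table of a type~$II$ algebra is formally identical to that of a type~$I$ algebra; it is convenient to let the single induction run over the whole range $2\leq i+j\leq n-p$, so that \eqref{eq7}--\eqref{eq8} appear simply as its portion with $i+j\geq p+1$.

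The inductive step is the core of the argument. Assuming \eqref{eq7} and \eqref{eq8} for the index $i$ and every admissible $j$, I would expand
\[e_{i+1}\circ f_j=e_1\circ(e_i\circ f_j)+e_1\circ(f_j\circ e_i),\]
substitute the two induction hypotheses, use that left multiplication by $e_1$ merely sends $f_{i+j}$ to $f_{i+j+1}$ (and sends $f_{n-p}$ to $0$, which only affects out-of-range terms), and then merge the two resulting sums by Pascal's rule $C_n^{m-1}+C_n^m=C_{n+1}^m$, obtaining the asserted formula with index $i+1$; the computation of $f_{i+1}\circ e_j$ is the mirror image.

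The step I expect to require the most care is the bookkeeping across the threshold $i+j=p$: in the inductive step the hypothesis must be invoked sometimes with $i+j\leq p$ (the companion formulas) and sometimes with $i+j\geq p+1$ (the formulas \eqref{eq7}--\eqref{eq8}), and one has to check that the binomial coefficients glue correctly at $i+j=p$ and that no $e$-term is spuriously created when $i+j$ first exceeds $p$. Away from this junction the argument is the routine induction-plus-Pascal manipulation already carried out for type~$I$.
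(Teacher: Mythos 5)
Your proposal is correct and follows essentially the same route as the paper: induction on $i$ with base cases $i=1$ and $j=1$, the Zinbiel expansion $e_{i+1}\circ f_j=e_1\circ(e_i\circ f_j)+e_1\circ(f_j\circ e_i)$, and Pascal's rule $C_n^{m-1}+C_n^m=C_{n+1}^m$ to merge the two sums. Your explicit attention to the normalization $\alpha_i=0$ and to the junction at $i+j=p$ (where the induction hypothesis must be drawn from the companion formulas valid for $2\leq i+j\leq p$) only makes explicit what the paper leaves implicit.
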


\begin{proof}  We shall prove the assertion of the proposition be induction. Clearly, the relation \eqref{eq7} is true for $i = 1$.

We have \[ f_2\circ e_1 =e_1\circ(f_1\circ
e_1)+e_1\circ(e_1\circ f_1)=(1+\beta_1)f_3.\]

Using the chain of equalities \[f_i\circ e_1=(e_1\circ f_{i-1})\circ
e_1 =e_1\circ(f_{i-1}\circ e_1)+e_1\circ(e_1\circ f_{i-1}),\] and induction, we derive $f_i\circ e_1 =(i-1+\beta_1)f_{i+1}$ for $1\leq i \leq p-1$ and $f_i\circ e_1 =(i-1+\beta_1)f_{i+1}$ for $p\leq i\leq
n-p-1$. Therefore, the relation \eqref{eq8} is true for $j = 1$.

Let us assume that the relations \eqref{eq7}-\eqref{eq8} are true for $i$ and any value of $j$. The proof of these relations for $i+1$ follows from the following chain of equalities:
\begin{align*}
e_{i+1}\circ f_j &=e_1\circ(e_i\circ
f_j)+e_1\circ(f_j\circ
e_i)\\
&=e_1\circ\left(\sum\limits_{k=0}^{i-1}{C_{i+j-2-k}^{j-1}\beta_k
}f_{i+j}+\sum\limits_{k =0}^i{C_{i+j-2-k}^{j-2}\beta_k
}f_{i+j}\right)\\
&=\sum\limits_{k=0}^{i-1}{C_{i+j-2-k}^{j-1}\beta_k}f_{i+j+1}+\sum\limits_{k
=0}^i{C_{i+j-2-k}^{j-2}\beta_k}f_{i+j+1}\\
&=\left({\sum\limits_{k=0}^{i-1}{C_{i+j-2-k}^{j-1}\beta_k}+\sum\limits_{k
=0}^i{C_{i+j-2-k}^{j-2}\beta_k}}\right)f_{i+j+1}=\sum\limits_{k=0}^i{C_{i+j-1
-k}^{j-1}\beta_k}f_{i+j+1}.
\end{align*}
Checking the  correctness of the remaining relations of the proposition is analogous.
\end{proof}

Similar to the case of Zinbiel algebra of type $I$ for algebras of type $II$ we obtain the restrictions on structure constants with relation to parameter $\beta_1$.

\begin{prop} Let $A$ be a Zinbiel algebra of type $II$. Then

if $\beta_1\neq1$ then
\[\left\{\begin{array}{ll} \gamma_i=0,&
1\leq i\leq p-2, \\[1mm]
\delta_i=0,& 1\leq i\leq n-p-2,\\[1mm]
(p-2+\beta_1)\gamma_{p-1}=0, &\\[1mm]
(n-p-2+\beta_1)\delta_{n-p-1}=0.\end{array}\right.\]

if $\beta_1=1$ then
\[\left\{\begin{array}{ll}
\beta_i=1,& 1\leq i\leq p, \\[1mm]
\gamma_i=\gamma_1,&
1\leq i\leq p-1, \\[1mm]
\delta_i=\delta_1,& 1\leq i\leq n-p-1.\\[1mm]
\end{array}\right.\]
\end{prop}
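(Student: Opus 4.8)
The plan is to mirror the proof of Proposition~\ref{prop5}, exploiting the Zinbiel identity in the form $(a\circ b)\circ c=(a\circ c)\circ b$ applied to the triple $(f_1,f_i,e_1)$ and comparing with the recursive relations coming from the previous two propositions. First I would record the two sets of restrictions that are already in hand: from the type~$II$ analogue of Proposition~\ref{prop1} we have $\alpha_{i+1}=0$ for $1\le i\le p-2$, the closed form $\beta_{i+1}=\prod_{k=0}^{i}\frac{k+\beta_1}{k+1}$ for $1\le i\le p-1$, and the two summed recursions
\[(i+1)\gamma_i=\beta_1\Bigl(2\gamma_1+\sum_{k=2}^{i}\gamma_k\Bigr),\quad 1\le i\le p-2,\qquad (i+\beta_1)\delta_i=\beta_1\Bigl(2\delta_1+\sum_{k=2}^{i}\delta_k\Bigr),\quad 1\le i\le n-p-2.\]
The first step is to turn these global (summed) recursions into the local two-term recursions $(i+1)\gamma_i=(i+\beta_1)\gamma_{i+1}$ and $(i+\beta_1)\delta_i=(i+\beta_1)\delta_{i+1}$ by subtracting consecutive instances, exactly as in \eqref{eq6}; but it is cleaner to obtain them directly from the identity $(f_1\circ f_i)\circ e_1=f_1\circ(f_i\circ e_1)+f_1\circ(e_1\circ f_i)$, using $f_i\circ e_1=(i-1+\beta_1)f_{i+1}$ (established in the preceding proposition for the type~$II$ case) on the right and $(f_1\circ f_i)\circ e_1=(\gamma_i e_{i+1}+\delta_i f_{i+1})\circ e_1=(i+1)\gamma_i e_{i+2}+(i+\beta_1)\delta_i f_{i+2}$ on the left.

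Next I would split into the two cases. \textbf{Case $\beta_1\ne1$.} From $(i+1)\gamma_i=(i+\beta_1)\gamma_{i+1}$ together with the summed recursion, induction gives $\gamma_i=0$ for $1\le i\le p-2$; feeding $i=p-2$ into the two-term relation leaves exactly $(p-2+\beta_1)\gamma_{p-1}=0$, with no further constraint on $\gamma_{p-1}$. The $\delta$-chain is even more transparent: $(i+\beta_1)\delta_i=(i+\beta_1)\delta_{i+1}$ means that whenever $i+\beta_1\ne0$ we get $\delta_i=\delta_{i+1}$, while the summed relation $(i+\beta_1)\delta_i=\beta_1(2\delta_1+\sum_{k=2}^i\delta_k)$ at $i=1$ forces $(1+\beta_1)\delta_1=2\beta_1\delta_1$, i.e. $(1-\beta_1)\delta_1=0$, hence $\delta_1=0$, and then propagation up the chain gives $\delta_i=0$ for $1\le i\le n-p-2$, with the terminal relation $(n-p-2+\beta_1)\delta_{n-p-1}=0$ surviving as the only surviving equation. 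This yields the first displayed system. \textbf{Case $\beta_1=1$.} Here $\beta_{i+1}=\prod_{k=0}^i\frac{k+1}{k+1}=1$, so $\beta_i=1$ for $1\le i\le p$. The relation $(i+1)\gamma_i=(i+1)\gamma_{i+1}$ gives $\gamma_i=\gamma_1$ for $1\le i\le p-1$, and $(i+1)\delta_i=(i+1)\delta_{i+1}$ gives $\delta_i=\delta_1$ for $1\le i\le n-p-1$, which is the second displayed system.

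The main obstacle is purely bookkeeping rather than conceptual: one must be careful about the exact ranges of the indices, since for type~$II$ the $e$-part of the algebra has length $p$ while the $f$-part has length $n-p$, so the $\gamma$'s (which land in the $e$-chain via $f_1\circ f_i=\gamma_i e_{i+1}+\dots$) are constrained only up to $i=p-1$, whereas the $\delta$'s run up to $i=n-p-1$; mixing these two ranges is the easy mistake to make. A secondary subtlety is justifying that in Case $\beta_1\ne1$ the relation $(i-1+\beta_1)$ never vanishes in the relevant range except possibly at the very top index, which is precisely why the terminal equations $(p-2+\beta_1)\gamma_{p-1}=0$ and $(n-p-2+\beta_1)\delta_{n-p-1}=0$ cannot be simplified further. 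Once these index ranges are pinned down, the argument is a routine induction identical in spirit to Proposition~\ref{prop5}, so I would simply write ``the proof repeats, \emph{mutatis mutandis}, that of Proposition~\ref{prop5}'' after spelling out the one genuinely new computation $(f_1\circ f_i)\circ e_1=(f_1\circ e_1)\circ f_i$.
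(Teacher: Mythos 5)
Your proposal follows exactly the route the paper intends: the paper states this proposition without proof, remarking only that the restrictions are obtained ``similar to the case of type $I$,'' i.e.\ by repeating the argument of Proposition~\ref{prop5}, and that is precisely what you do --- derive the two-term recursions $(i+1)\gamma_i=(i+\beta_1)\gamma_{i+1}$ and $(i+\beta_1)\delta_i=(i+\beta_1)\delta_{i+1}$ from $(f_1\circ f_i)\circ e_1=f_1\circ(f_i\circ e_1)+f_1\circ(e_1\circ f_i)$, and combine them with the summed recursions supplied by the type $II$ analogue of Proposition~\ref{prop1}. Your index bookkeeping ($\gamma$ up to $p-1$, $\delta$ up to $n-p-1$) is also the correct one.

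One point needs repair. Your claim that $i+\beta_1$ ``never vanishes in the relevant range except possibly at the very top index'' is false: at this stage $\beta_1$ is an arbitrary complex number, and in the eventual classification of type $II$ algebras it is in fact a negative integer in $\{-p,\dots,-1\}$, so $i+\beta_1$ can vanish at an interior index. There the two-term relation $(i+\beta_1)\delta_i=(i+\beta_1)\delta_{i+1}$ is vacuous and ``propagation up the chain'' stalls. The gap is harmless because the summed recursion closes it: once $\delta_1=\dots=\delta_i=0$, the relation $(i+1+\beta_1)\delta_{i+1}=\beta_1\bigl(2\delta_1+\sum_{k=2}^{i+1}\delta_k\bigr)=\beta_1\delta_{i+1}$ yields $(i+1)\delta_{i+1}=0$, hence $\delta_{i+1}=0$ with no condition on $\beta_1$. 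This device --- which you do invoke for the $\gamma$'s, where one needs that $(i+2-\beta_1)\gamma_{i+1}=0$ and $(i+\beta_1)\gamma_{i+1}=0$ cannot both be vacuous --- should be made the explicit engine of the induction for the $\delta$'s as well, with the two-term relations reserved for producing the terminal equations $(p-2+\beta_1)\gamma_{p-1}=0$ and $(n-p-2+\beta_1)\delta_{n-p-1}=0$.
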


In the next theorem we prove that there is no $n$-dimensional Zinbiel algebras of type $II$ with
$n\geq3p+2$.

\begin{thm} There is no Zinbiel algebras of type $II$  with characteristic sequence equal to $(n-p,
p)$ for $n\geq 3p+2$.
\end{thm}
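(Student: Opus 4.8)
The plan is to derive a contradiction by pushing the multiplication relations far enough up the filtration that some basis element $f_j$ with $p+1 \le j \le n-p$ is forced to vanish, contradicting $\dim A = n \ge 3p+2$. The starting point is the structural data collected above: for a type $II$ algebra we have $e_1\circ e_i = e_{i+1}$ for $1\le i\le p-1$, $e_1\circ f_i = f_{i+1}$ for $1\le i\le n-p-1$, and the products $e_i\circ f_j$, $f_i\circ e_j$, $f_i\circ f_j$ governed by the preceding three propositions. I would split into the two cases $\beta_1\ne 1$ and $\beta_1=1$ dictated by the last proposition, since the form of the $\gamma_i,\delta_i$ (and hence which products can be nonzero) differs.

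First I would treat the generic case $\beta_1\ne 1$. Here $\gamma_i=\delta_i=0$ except possibly $\gamma_{p-1}$ and $\delta_{n-p-1}$, with $(p-2+\beta_1)\gamma_{p-1}=0$ and $(n-p-2+\beta_1)\delta_{n-p-1}=0$. The key computation is to evaluate a left-normed triple product of the form $(f_1\circ f_{n-p-1})\circ e_1$ in two ways, exactly as in the proof of Proposition~\ref{prop5}: on one side the Zinbiel identity $(a\circ b)\circ c=(a\circ c)\circ b$ together with $a\circ(b\circ c)$-expansions rewrites it in terms of $f_1\circ(f_{n-p-1}\circ e_1)$ and $f_1\circ(e_1\circ f_{n-p-1})$, i.e.\ in terms of $\delta_{n-p}$ and $\delta_{n-p-1}$; on the other side $(\delta_{n-p-1}f_{n-p})\circ e_1 = (n-p-1+\beta_1)\delta_{n-p-1}f_{n-p}$ but the element $f_{n-p+1}$ it would produce must be zero since $\dim A^{n-p+1}=0$. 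Combined with the chain $0=(e_1\circ e_1)\circ f_j = e_1\circ(e_1\circ f_j)+e_1\circ(f_j\circ e_1) = e_1\circ f_{j+1}+(\,\cdot\,)e_1\circ f_{j+1}$, which forces certain $f_{j+1}=0$ when the scalar coefficient is the wrong value, one gets the collapse. Concretely, evaluating $0=(e_1\circ e_1)\circ f_p$ gives $e_1\circ f_{p+1}+\beta_1\, e_1\circ f_{p+1}=(1+\beta_1)f_{p+2}=0$, and since $n\ge 3p+2$ means $f_{p+2}$ (indeed $f_{p+2},\dots,f_{n-p}$) are genuine nonzero basis vectors, we must have $\beta_1=-1$; then feeding $\beta_1=-1$ back into $\beta_{i+1}=\prod_{k=0}^{i}\frac{k+\beta_1}{k+1}$ and into the relation for $f_1\circ e_{p}=\beta_pf_{p+1}$ together with $(n-p-2+\beta_1)\delta_{n-p-1}=0$ yields a further vanishing that kills $f_{n-p}$, a contradiction. (If instead $1+\beta_1\ne 0$ we already have $f_{p+2}=0$ directly, which is the contradiction.)

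For the special case $\beta_1=1$ the analysis is shorter: then $\beta_i=1$ for all $i\le p$, $\gamma_i=\gamma_1$ and $\delta_i=\delta_1$ throughout the allowed ranges, and $f_1\circ f_i=\gamma_1e_{i+1}+\delta_1f_{i+1}$. Now I would compute $(e_1\circ e_1)\circ f_p=0$ as above, getting $e_1\circ f_{p+1}+e_1\circ f_{p+1}=2f_{p+2}=0$, hence $f_{p+2}=0$ outright; since $n\ge 3p+2$ forces $f_{p+2}\ne 0$, this is the contradiction, and in fact it already rules out $n\ge p+3$ in this case except insofar as the other products rescue dimension — but here no $e$-type product feeds the $f_j$ for $j>p$, so the contradiction stands. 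I expect the main obstacle to be bookkeeping: making sure every triple product is expanded with the correct binomial/structure-constant coefficients coming from \eqref{eq7}, \eqref{eq8} and the type $II$ analogues of Propositions~\ref{prop1} and \ref{prop5}, and verifying that the scalar multiplying the "would-be" element $f_{n-p+1}$ (or $f_{p+2}$) is genuinely forced to be nonzero under the hypothesis $n\ge 3p+2$, so that its vanishing truly contradicts $\dim A=n$. Once the right triple product is identified the contradiction is immediate; choosing that product is the only subtle point.
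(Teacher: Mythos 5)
Your central computation rests on a false identity: in a type $II$ algebra one has $e_1\circ e_1=e_2\neq 0$ (the operator $L_{e_1}$ begins with the Jordan block $J_p$, $p\geq 3$), so the triple product $(e_1\circ e_1)\circ f_j$ is $e_2\circ f_j$, not $0$. Consequently the deduction ``$0=(e_1\circ e_1)\circ f_p=(1+\beta_1)f_{p+2}$, hence $\beta_1=-1$'' (and its analogue $2f_{p+2}=0$ in the case $\beta_1=1$) collapses. A quick sanity check confirms this: if your relation were valid it would apply to every type $II$ algebra with $n\geq 2p+2$ and would force $\beta_1=-1$, contradicting the existence of the algebras $\widetilde{A_6},\dots,\widetilde{A_{10}}$ classified later in the paper, which realize every $\beta_1\in\{-p,\dots,-1\}$. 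The correct vanishing product to exploit is $e_1\circ e_p=0$ (the end of the block $J_p$), which is what the paper uses: expanding $0=(e_1\circ e_p)\circ f_i=e_1\circ(e_p\circ f_i)+e_1\circ(f_i\circ e_p)$ via \eqref{eq7}--\eqref{eq8} yields the linear relations $\sum_{k=0}^{p}C_{p+i-1-k}^{i-1}\beta_k=0$.

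There is a second, structural gap. Even with the correct vanishing product, no single relation (nor the pair of relations you sketch) can prove the statement, because the bound $n\geq 3p+2$ is sharp: the paper exhibits a genuine type $II$ Zinbiel algebra in dimension $n=3p+1$ with $\beta_i=(-1)^iC_p^i$, and these $\beta_i$ satisfy every relation available up to $i=p$ by Lemma~\ref{lem2}. One must therefore collect all $p+1$ relations $i=1,\dots,p+1$ --- and it is precisely the availability of the $(p+1)$-st relation that requires $p+(p+1)+1\leq n-p$, i.e.\ $n\geq 3p+2$ --- and then show the $(p+1)\times(p+1)$ coefficient matrix of binomial coefficients is nonsingular (its determinant is $-1$), forcing $\beta_0=0$ and contradicting $\beta_0=1$. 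Your case split on $\beta_1=1$ versus $\beta_1\neq 1$ and the auxiliary product $(f_1\circ f_{n-p-1})\circ e_1$ only constrain the $\gamma_i,\delta_i$, which are not the source of the obstruction here.
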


\begin{proof} Consider for $1\leq i\leq p+1$ equalities
\[0=(e_1\circ e_p)\circ f_i=e_1\circ(e_p\circ f_i)+e_1\circ(f_i\circ e_p).\]

Applying the relations \eqref{eq7}, \eqref{eq8} and arguments similar to the ones that are used in the proof of Proposition~\ref{prop3}, we derive the relation

\begin{equation}\label{eq9}\sum\limits_{k=0}^p{C_{p+i-1-k}^{i-1}\beta_k}=0,\end{equation}
where $\beta_0=1$ and $1\leq i\leq n-p-1$.

Now we consider the determinant of the matrix of order $p+1:$
\[M=\begin{vmatrix}
1&1&1&\dots&1&1\\
C_{p+1}^1 & C_p^1 & C_{p-1}^1&\dots& C_2^1 & 1 \\
C_{p+2}^2 & C_{p+1}^2 & C_p^2 &\dots& C_3^2 &1 \\
\vdots&\vdots&\vdots&\vdots&\vdots&\vdots \\
C_{2p-1}^{p-1} & C_{2p-2}^{p-1} &C_{2p-3}^{p-1} &\dots& C_p^{p-1} &1 \\
C_{2p}^p & C_{2p-1}^p & C_{2p-2}^p &\dots& C_{p+1}^p & 1
 \end{vmatrix}
\]

Taking into account identity $C_n^{m-1}+C_n^m=C_{n+1}^m$ and subtracting from each row the previous one we obtain
\[M=\begin{vmatrix}
1&1&1&....&1&1 \\
C_p^1&C_{p-1}^1&C_{p-2}^1&\dots&1&0 \\
C_p^2&C_{p-1}^2&C_{p-2}^2&\dots&0&0 \\
\vdots&\vdots&\vdots&\vdots&\vdots&\vdots\\
C_p^{p-1}&1&0&\dots&0&0 \\
1&0&0&\dots&0&0
\end{vmatrix}=-1.\]

Since $M=-1$, the system of equations \eqref{eq9} for $i=p+1$ has only trivial solution with respect to unknown variables $\beta_i$. In particular,
$\beta_0=0$. However, $\beta_0=1$, that is, we get a contradiction to the condition
$i=p+1\leq n-p-1$, which implies the  non existence of an algebra under  the condition $n\geq 3p+2$.
\end{proof}

Let $A$ be an $n$-dimensional algebra with a basis $\{e_1, e_2, \dots, e_p, f_1, f_2, \dots, f_{n-p} \}$ and the table of multiplication
\[\begin{array}{lll}
 e_i\circ e_j=C_{i+j-1}^je_{i+j}, & 2\leq i+j\leq p,& e_1\circ
e_p=0,\\[1mm]
 e_1\circ f_{n-p}=0, & f_i\circ f_j=0, & 1\leq i, j \leq n-p,\\[1mm]
 e_i\circ f_j=\sum\limits_{k=0}^{i-1}{C_{i+j-2-k}^{j-1}\beta_k}
f_{i+j}, & f_i\circ
e_j=\sum\limits_{k=0}^j{C_{i+j-2-k}^{i-2}\beta_k}f_{i+j}, &2\leq
i+j\leq n-p,\\
\end{array}\] where $\beta_i=(-1)^i C_p^i, \ \ 0\leq i\leq p$.

It is easy to check that this algebra is a Zinbiel algebra.

The correctness of the relation \eqref{eq9} for parameters $\beta_i$ for $n=3p+1$ follows from Lemma~\ref{lem2}.
Thus, the condition $n\geq 3p+2$ is necessary.

We list the next theorems on the description of Zinbiel algebras of type $II$ without proofs. It can be carried out by applying similar arguments that were used above.

\begin{thm} A Zinbiel algebra of type $II$  with characteristic sequence equal to $(p+1,
p)$ is isomorphic to one of the following non-isomorphic algebras:

\[\widetilde{A_1}:\left\{\begin{array}{ll}
e_i\circ e_j = C_{i+j-1}^je_{i+j}, & 2\leq i+j\leq p,\\[1mm]
e_i\circ f_j =
\sum\limits_{k=0}^{i-1}{C_{i+j-2-k}^{j-1}\beta_k}f_{i+j}, \ \
f_i\circ e_j=\sum\limits_{k=0}^j{C_{i+j-2-k}^{i-2}
\beta_k}f_{i+j},  & 2\leq i+j\leq p+1,\end{array}\right.\] where
$\beta_0=1, \ \
 \beta_{i+1}=\prod\limits_{k=0}^{i}\frac{k+\beta_1}{k+1}$
for $1\leq i\leq p-1, \ \ \beta_1\in\{-p, -(p-1), \dots, -2, -1\}$;
\[\widetilde{A_2}:\left\{\begin{array}{lll} e_i\circ e_j = C_{i+j-1}^je_{i+j}, & 2\leq i+j\leq p, & f_1\circ f_{p-1}=e_{p}, \\[1mm]
e_i\circ f_j =
\sum\limits_{k=0}^{i-1}{C_{i+j-2-k}^{j-1}\beta_k}f_{i+j}, &
f_i\circ e_j=\sum\limits_{k=0}^j{C_{i+j-2-k}^{i-2}
\beta_k}f_{i+j},  & 2\leq i+j\leq p+1,\\[1mm]
 \end{array}\right.\]
where $\beta_i=(-1)^iC_{p-2}^i$ for $0\leq i\leq p-2$  и $\beta_{p-1}=\beta_p=0$;
\[\widetilde{A_3}:\left\{\begin{array}{lll}e_i\circ e_j = C_{i+j-1}^je_{i+j}, & 2\leq i+j\leq p, &  f_1\circ f_{p}=f_{p+1},\\[1mm]
e_i\circ f_j =
\sum\limits_{k=0}^{i-1}{C_{i+j-2-k}^{j-1}\beta_k}f_{i+j}, &
f_i\circ e_j=\sum\limits_{k=0}^j{C_{i+j-2-k}^{i-2}
\beta_k}f_{i+j},  & 2\leq i+j\leq p+1,\\[1mm]
 \end{array}\right.\]
where $\beta_i=(-1)^iC_{p-1}^i$  for $0\leq i\leq p-1$  and $\beta_p=0$;

\[\widetilde{A_4}:\left\{\begin{array}{ll} e_i\circ e_j = C_{i+j-1}^je_{i+j}, & 2\leq i+j\leq p,\\[1mm]
e_i\circ f_j = f_i\circ e_j=C_{i+j-1}^{j}f_{i+j},  & 2\leq i+j\leq
p+1. \end{array}\right.\]
\[\widetilde{A_5}:\left\{\begin{array}{ll} e_i\circ e_j = C_{i+j-1}^je_{i+j}, & 2\leq i+j\leq p,\\[1mm]
e_i\circ f_j = f_i\circ e_j=C_{i+j-1}^{j}f_{i+j},  \  \ f_i\circ
f_{j} = C_{i+j-1}^{j}f_{i+j},  & 2\leq i+j\leq p+1.
\end{array}\right.\]
\end{thm}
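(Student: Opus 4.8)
The plan is to follow the scheme already used for type $I$ in Theorems~\ref{thm3.7} and \ref{thm3.8}, now applied to the basis $\{e_1,\dots,e_p,f_1,\dots,f_{n-p}\}$ with $p+1\leq n-p\leq p+1$, i.e. $n=2p+1$. First I would invoke the preceding propositions of this subsection: the analogue of Proposition~\ref{prop1} fixes $\alpha_{i+1}=0$, expresses $\beta_{i+1}=\prod_{k=0}^{i}\frac{k+\beta_1}{k+1}$, and gives the recurrences for $\gamma_i,\delta_i$; the analogue of Proposition~\ref{prop5} then splits the discussion into the cases $\beta_1\neq 1$ and $\beta_1=1$. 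In the case $\beta_1\neq 1$ we are left with $\gamma_i=0$ for $1\le i\le p-2$, $\delta_i=0$ for $1\le i\le n-p-2=p-1$, together with the two boundary constraints $(p-2+\beta_1)\gamma_{p-1}=0$ and $(n-p-2+\beta_1)\delta_{n-p-1}=(p-1+\beta_1)\delta_{p}=0$; here the only possibly nonzero new structure constants are $\gamma_{p-1}$ (coefficient of $f_1\circ f_{p-1}$ along $e_p$) and $\delta_{p}$ (coefficient of $f_1\circ f_{p}$ along $f_{p+1}$). Wait — for $n=2p+1$ the index $p$ in $f_p$ is within range and $f_{p+1}=f_{n-p}$ is the top, so I must be careful which of $\widetilde{A_2}$ (producing $e_p$) and $\widetilde{A_3}$ (producing $f_{p+1}$) each subcase yields.

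Second, I would perform the general change of basis $e_1'=Ae_1+Bf_1$, $f_1'=Ce_1+Df_1$ with $AD-BC\neq 0$, using the type~$II$ analogue of Proposition~\ref{prop7}: from $f_1'\circ f_1'=0$ one gets $C=0$, and then $e_i'=A^ie_i+A^{i-1}B(\sum_{k=0}^{i-1}\beta_k)f_i+\cdots$, $f_i'=A^{i-1}Df_i+\cdots$. Tracking how $\gamma_{p-1}$ and $\delta_{p}$ transform under this change, exactly as in the proof of Theorem~\ref{thm3.8}: if $\gamma_{p-1}\neq0$ the constraint $(p-2+\beta_1)\gamma_{p-1}=0$ forces $\beta_1=2-p$, hence $\beta_i=(-1)^iC_{p-2}^i$, and scaling by an appropriate choice of $D$ (and possibly a square root of $\gamma_{p-1}$) normalises $\gamma_{p-1}=1$, giving $\widetilde{A_2}$ with $\beta_{p-1}=\beta_p=0$; if instead $\gamma_{p-1}=0$ but $\delta_{p}\neq0$ the constraint $(p-1+\beta_1)\delta_{p}=0$ forces $\beta_1=1-p$, hence $\beta_i=(-1)^iC_{p-1}^i$, and a rescaling normalises $\delta_{p}=1$, giving $\widetilde{A_3}$ with $\beta_p=0$; if both vanish we get the one-parameter family $\widetilde{A_1}$, where the restriction $\beta_1\in\{-1,-2,\dots,-p\}$ arises because for other values of $\beta_1$ some $\beta_{i}$ would force a nonzero top product (via relation~\eqref{eq9}-type identities combined with the vanishing of $e_1\circ e_p$ and $e_1\circ f_{n-p}$) and thereby alter the characteristic sequence or contradict nilpotency. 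Finally, the case $\beta_1=1$ gives $\beta_i=1$ for all $i$, so $e_i\circ f_j=f_i\circ e_j=C_{i+j-1}^j f_{i+j}$, and the surviving freedom in $\gamma_i=\gamma_1$, $\delta_i=\delta_1$ is killed or normalised to produce the two split algebras $\widetilde{A_4}$ (with $f\circ f=0$) and $\widetilde{A_5}$ (with $f_i\circ f_j=C_{i+j-1}^j f_{i+j}$), the latter when the $\delta$-part can be rescaled to $1$.

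The last step is to check pairwise non-isomorphism of $\widetilde{A_1},\dots,\widetilde{A_5}$. This is done by comparing invariants stable under the admissible changes of basis: whether $A\circ A$ contains $e_p$ (true only for $\widetilde{A_2}$), whether $A^2$ has the right dimension and $f_{p+1}\in A\circ(\mathrm{something})$ in a way realised by $f_1\circ f_p$ (distinguishes $\widetilde{A_3}$), whether all $f_i\circ f_j$ vanish ($\widetilde{A_1}$ and $\widetilde{A_4}$) versus are nonzero ($\widetilde{A_5}$), and within the $f\circ f=0$ algebras the value of $\beta_1$ as an isomorphism invariant separating $\widetilde{A_1}$ from $\widetilde{A_4}$ (the latter being the $\beta_1=1$ degeneration). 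I expect the main obstacle to be precisely the bookkeeping of the boundary structure constants at indices $p-1,p,p+1$: because $n-p=p+1$ is small, several ranges that are disjoint for $n\geq 2p+2$ now overlap, so one must verify by hand that the Zinbiel identity does not force additional products to vanish and that the surviving constants $\gamma_{p-1}$, $\delta_p$, $\gamma_1$, $\delta_1$ transform as claimed — and, in the one-parameter family, that the admissible range of $\beta_1$ is exactly $\{-1,\dots,-p\}$ and no two distinct such values give isomorphic algebras.
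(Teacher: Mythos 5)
You should first note that the paper does not actually prove this theorem: the authors state explicitly that the type $II$ theorems are ``listed without proofs,'' so there is no internal proof to compare against, and your plan of transplanting the arguments of Theorems~\ref{thm3.7} and~\ref{thm3.8} is exactly the route the paper indicates. Your bookkeeping of the surviving constants is correct: for $n=2p+1$ the type $II$ analogue of Proposition~\ref{prop5} with $\beta_1\neq 1$ leaves only $\gamma_{p-1}$ (alive only if $\beta_1=2-p$, giving $\widetilde{A_2}$ with $\beta_i=(-1)^iC_{p-2}^i$) and $\delta_{n-p-1}=\delta_p$ (alive only if $\beta_1=1-p$, giving $\widetilde{A_3}$ with $\beta_i=(-1)^iC_{p-1}^i$), these two subcases are mutually exclusive, and the normalizations via $C=0$, $D$ suitably chosen go through as in Theorem~\ref{thm3.8}.

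The genuine gap is your justification of $\beta_1\in\{-p,\dots,-1\}$ in $\widetilde{A_1}$. You attribute it to ``relation~\eqref{eq9}-type identities combined with the vanishing of $e_1\circ e_p$ and $e_1\circ f_{n-p}$,'' but for $n=2p+1$ these identities are vacuous: the constraint $\sum_{k=0}^{p}C_{p+i-1-k}^{i-1}\beta_k=0$ is extracted from $0=(e_1\circ e_p)\circ f_i=e_1\circ\bigl((e_p\circ f_i)+(f_i\circ e_p)\bigr)$ only when $e_1\circ f_{p+i}=f_{p+i+1}\neq 0$, i.e.\ when $i\leq n-2p-1$, and for $n=2p+1$ this range is empty. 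Indeed, for arbitrary $\beta_1$ the table with all $\gamma_i=\delta_i=0$ is a perfectly valid Zinbiel algebra with characteristic sequence $(p+1,p)$, so neither the Zinbiel identity nor nilpotency nor the characteristic sequence is violated. What actually singles out $\{-p,\dots,-1\}$ is the type $I$/type $II$ dichotomy: for $y=ae_1+bf_1$ one computes $L_y^{k}(y)=a^{k+1}e_{k+1}+a^{k}b\bigl(\sum_{j=0}^{k}\beta_j\bigr)f_{k+1}$ and hence $L_y^{p}(y)=a^{p}b\bigl(\sum_{j=0}^{p}\beta_j\bigr)f_{p+1}=a^{p}b\,C_{\beta_1+p}^{p}\,f_{p+1}$, so unless $C_{\beta_1+p}^{p}=0$ (equivalently $\beta_1\in\{-1,\dots,-p\}$) the element $y=e_1+f_1$ generates a Jordan block of size $p+1$ containing itself and the algebra is of type $I$, hence already accounted for in Theorem~\ref{thm3.8}. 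Your sketch, as written, would either derive no restriction on $\beta_1$ at all or derive it from an identity that gives $0=0$; you need to replace that step with the exclusion-of-type-$I$ argument (and, for full rigor, address why $\widetilde{A_4}$ and $\widetilde{A_5}$, which also admit such a generator when $\beta_1=1$, are nevertheless retained in the list).
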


\begin{thm} A Zinbiel algebra of type $II$ with characteristic sequence equal to $(p+2, p)$ is isomorphic to one of the following non-isomorphic algebras:

\[\widetilde{A_6}:\left\{\begin{array}{ll} e_i\circ e_j = C_{i+j-1}^je_{i+j}, & 2\leq i+j\leq p,\\[1mm]
e_i\circ f_j =
\sum\limits_{k=0}^{i-1}{C_{i+j-2-k}^{j-1}\beta_k}f_{i+j},  & 1\leq
i\leq p, \ \ 2\leq i+j\leq p+2,\\[1mm]
f_i\circ e_j=\sum\limits_{k=0}^j{C_{i+j-2-k}^{i-2}
\beta_k}f_{i+j},  & 1\leq j\leq p, \ \ 2\leq i+j\leq
p+2,\end{array}\right.\]
where $\beta_0=1, \ \
 \beta_{i+1}=\prod\limits_{k=0}^{i}\frac{k+\beta_1}{k+1}$
 for $1\leq i\leq p-1, \ \ \beta_1\in\{-p, -(p-1), \dots, -2, -1\}$;
\[\widetilde{A_7}:\left\{\begin{array}{ll}
 e_i\circ e_j = C_{i+j-1}^je_{i+j}, & 2\leq i+j\leq p,  \ \ f_1\circ f_{p-1}=e_{p},\\[1mm]
 e_i\circ f_j = \sum\limits_{k=0}^{i-1}{C_{i+j-2-k}^{j-1}\beta_k}f_{i+j},  & 1\leq i\leq p, \ \ 2\leq i+j\leq
p+2,\\[1mm]
f_i\circ e_j=\sum\limits_{k=0}^j{C_{i+j-2-k}^{i-2}
\beta_k}f_{i+j},  & 1\leq j\leq p, \ \ 2\leq i+j\leq
p+2,\end{array}\right.\]
 where $\beta_i=(-1)^iC_{p-2}^i$ for $0\leq i\leq p-2$  и $\beta_{p-1}=\beta_p=0$;
\[\widetilde{A_8}:\left\{\begin{array}{ll} e_i\circ e_j = C_{i+j-1}^je_{i+j}, & 2\leq i+j\leq p,  \ \ f_1\circ f_{p+1}=f_{p+2},\\[1mm]
e_i\circ f_j =
\sum\limits_{k=0}^{i-1}{C_{i+j-2-k}^{j-1}\beta_k}f_{i+j},  & 1\leq
i\leq p, \ \ 2\leq i+j\leq p+2,\\[1mm]
f_i\circ e_j=\sum\limits_{k=0}^j{C_{i+j-2-k}^{i-2}
\beta_k}f_{i+j},  & 1\leq j\leq p, \ \ 2\leq i+j\leq
p+2,\end{array}\right.\]
where $\beta_i=(-1)^iC_{p}^i$ for $0\leq i\leq p$.
\end{thm}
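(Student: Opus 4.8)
The plan is to repeat the scheme of the proofs of Theorems~\ref{thm3.7} and \ref{thm3.8}, now with $n=2p+2$, so that the basis is $\{e_1,\dots,e_p,f_1,\dots,f_{p+2}\}$. First I would write out the generic multiplication table of a type $II$ algebra with $C(A)=(p+2,p)$ from the three propositions preceding the theorem, splitting into the cases $\beta_1\neq1$ and $\beta_1=1$. For $\beta_1\neq1$, the substitution $f_1\mapsto\frac{\alpha_1}{\beta_1-1}e_1+f_1$ makes $\alpha_1=\alpha_2=\dots=\alpha_{p-1}=0$; the constants $\beta_k$ are then prescribed by $\beta_{k+1}=\prod_{j=0}^{k}\frac{j+\beta_1}{j+1}$, the products $e_i\circ f_j$ and $f_i\circ e_j$ by \eqref{eq7}--\eqref{eq8}, and the restriction proposition for type $II$ kills every $\gamma_i$ except possibly $\gamma_{p-1}$ (subject to $(p-2+\beta_1)\gamma_{p-1}=0$, so $\gamma_{p-1}\neq0$ forces $\beta_1=2-p$) and every $\delta_i$ except possibly $\delta_{p+1}$ (subject to $(p+\beta_1)\delta_{p+1}=0$, so $\delta_{p+1}\neq0$ forces $\beta_1=-p$). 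Since $2-p\neq-p$ for $p\geq3$, at most one of $\gamma_{p-1},\delta_{p+1}$ survives, and when both vanish every $f_i\circ f_j$ vanishes too (by induction on $i+j$, using that $f_i\circ f_j=e_1\circ(f_{i-1}\circ f_j)+e_1\circ(f_j\circ f_{i-1})$ reduces to $e_1$-multiples of products $f_1\circ f_k=0$), so the whole table depends only on $\beta_1$.

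The new ingredient compared to type $I$ is the relation produced by $e_1\circ e_p=0$. Expanding $0=(e_1\circ e_p)\circ f_1=e_1\circ(e_p\circ f_1)+e_1\circ(f_1\circ e_p)$ with \eqref{eq7}--\eqref{eq8} gives the $i=1$ case of \eqref{eq9}, namely $\sum_{k=0}^p\beta_k=0$ (the analogues for $i\geq2$ are vacuous because $e_1\circ f_{p+2}=0$). Regarded as a function of $\beta_1$ the left-hand side is a polynomial of degree $p$ with leading coefficient $1/p!$; since $\beta_1=-j$ with $1\leq j\leq p$ gives $\beta_k=(-1)^k C_j^k$ and hence $\sum_{k=0}^p\beta_k=(1-1)^j=0$, the numbers $-1,-2,\dots,-p$ are exactly its roots, so $\beta_1\in\{-1,-2,\dots,-p\}$. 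In particular $\beta_1=1$ is impossible, because then $\beta_k=1$ for all $k$ and $\sum_{k=0}^p\beta_k=p+1\neq0$; this is why no algebra with $\beta_1=1$ occurs in the list.

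It then remains to normalize the surviving constant in each branch by the general change of basis $e_1'=Ae_1+Bf_1$, $f_1'=Ce_1+Df_1$. As in Proposition~\ref{prop7}, $f_1'\circ f_1'=0$ forces $C=0$, after which $e_i'=A^ie_i+A^{i-1}B\left(\sum_{k=0}^{i-1}\beta_k\right)f_i$ and $f_i'=A^{i-1}Df_i$, a description one extends to $e_p',f_{p+1}',f_{p+2}'$ using that $\beta_k=0$ for the relevant $k$ and that the lower $\delta_i$ vanish. If $\gamma_{p-1}=\delta_{p+1}=0$ we land on $\widetilde{A_6}$, with $\beta_1\in\{-1,\dots,-p\}$; here $\beta_1$ is an isomorphism invariant since $f_1'\circ e_1'=AD\beta_1 f_2=\beta_1 f_2'$. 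If $\gamma_{p-1}\neq0$ then $\beta_1=2-p$, hence $\beta_i=(-1)^i C_{p-2}^i$ and $\beta_{p-1}=\beta_p=0$; comparing $f_1'\circ f_{p-1}'=A^{p-2}D^2\gamma_{p-1}e_p$ with $\gamma_{p-1}'e_p'=\gamma_{p-1}'A^p e_p$ gives $\gamma_{p-1}'=(D/A)^2\gamma_{p-1}$, and choosing $D/A$ with $(D/A)^2=\gamma_{p-1}^{-1}$ yields $\widetilde{A_7}$. If $\delta_{p+1}\neq0$ then $\beta_1=-p$, $\beta_i=(-1)^i C_p^i$, and $f_1'\circ f_{p+1}'=A^p D^2\delta_{p+1}f_{p+2}$ against $\delta_{p+1}'A^{p+1}D f_{p+2}$ gives $\delta_{p+1}'=(D/A)\delta_{p+1}$, which we set equal to $1$, obtaining $\widetilde{A_8}$.

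Finally I would check that $\widetilde{A_6},\widetilde{A_7},\widetilde{A_8}$ are pairwise non-isomorphic: $\widetilde{A_6}$ is the only one with $f_i\circ f_j\equiv0$; in $\widetilde{A_7}$ the nonzero product $f_1\circ f_{p-1}=e_p$ lies in the $\langle e_p\rangle$-component of $A^p$, whereas in $\widetilde{A_8}$ the nonzero product $f_1\circ f_{p+1}=f_{p+2}$ lies in $A^{p+2}$, and moreover these two carry different values of $\beta_1$; and distinct values of $\beta_1$ separate the members of the family $\widetilde{A_6}$. Since the case $\beta_1=1$ has been excluded, this exhausts the possibilities. The main obstacle is the change-of-basis bookkeeping: confirming that after $C=0$ no structural constant other than $\gamma_{p-1}$ or $\delta_{p+1}$ can be nonzero, that the induced transformations of these two are precisely the scalings above (in particular that the residual $f_p$-corrections to $e_p'$ do not spoil $e_1'\circ e_p'=0$), and that each of the three normal forms genuinely satisfies the Zinbiel identity.
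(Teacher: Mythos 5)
The paper states this theorem without proof, deferring to ``similar arguments that were used above,'' and your reconstruction is exactly that argument carried out correctly for $n=2p+2$: the single relation $\sum_{k=0}^{p}\beta_k=0$ coming from $(e_1\circ e_p)\circ f_1$ correctly pins $\beta_1$ to $\{-1,\dots,-p\}$ (in particular killing the $\beta_1=1$ branch), the surviving constants $\gamma_{p-1}$ and $\delta_{p+1}$ are mutually exclusive and normalize as you compute, and the invariance of $\beta_1$ separates the family $\widetilde{A_6}$. I see no gap beyond the routine verifications you already flag, so the proposal matches the paper's intended proof.
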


\begin{thm} A Zinbiel algebra of type $II$ with characteristic sequence equal to $(p+t, p)$, for $3\leq t\leq p+1$, is isomorphic to one of the following non-isomorphic algebras:
\[\widetilde{A_9}: \left\{\begin{array}{ll}
e_i\circ e_j = C_{i+j-1}^je_{i+j}, & 2\leq i+j\leq p,\\[1mm]
e_i\circ f_j =
\sum\limits_{k=0}^{i-1}{C_{i+j-2-k}^{j-1}\beta_k}f_{i+j},  & 1\leq
i\leq p, \ \ 2\leq i+j\leq p+t,\\[1mm]
f_i\circ e_j=\sum\limits_{k=0}^j{C_{i+j-2-k}^{i-2}
\beta_k}f_{i+j},  & 1\leq j\leq p, \ \ 2\leq i+j\leq
p+t,\end{array}\right.\]
where $\beta_0=1, \ \
 \beta_{i+1}=\prod\limits_{k=0}^{i}\frac{k+\beta_1}{k+1}$
 for $1\leq i\leq p-1, \ \ \beta_1\in\{-p, -(p-1), \dots, -(t-1)\}$;
\[\widetilde{A_{10}}: \left\{\begin{array}{ll}
e_i\circ e_j = C_{i+j-1}^je_{i+j}, & 2\leq i+j\leq p,  \ \
f_1\circ f_{p-1}=e_{p},\\[1mm]
e_i\circ f_j =
\sum\limits_{k=0}^{i-1}{C_{i+j-2-k}^{j-1}\beta_k}f_{i+j},
 & 1\leq i\leq p, \ \ 2\leq i+j\leq p+t,\\[1mm]
 f_i\circ e_j=\sum\limits_{k=0}^j{C_{i+j-2-k}^{i-2}
\beta_k}f_{i+j},  & 1\leq j\leq p, \ \ 2\leq i+j\leq
p+t,\end{array}\right.\]
 where $\beta_i=(-1)^iC_{p-2}^i$ for $0\leq i\leq p-2$  and $\beta_{p-1}=\beta_p=0$.
\end{thm}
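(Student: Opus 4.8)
The plan is to follow the template of the proofs of Theorems~\ref{thm3.7} and~\ref{thm3.8}, replacing the type~$I$ propositions by their type~$II$ counterparts established above. Write $n=2p+t$, so that $n-p=p+t$. After the normalising change of basis that makes $\alpha_1=0$ when $\beta_1\ne 1$, the products fixed by Lemma~\ref{lem1} together with the type~$II$ analogues of Propositions~\ref{prop1} and~\ref{prop5} collapse the multiplication table to the $e$-$e$, $e$-$f$ and $f$-$e$ products controlled by the constants $\beta_k$ (given by \eqref{eq7} and \eqref{eq8}), plus the only surviving unknowns, hidden in $f_1\circ f_{p-1}=\gamma_{p-1}e_p+\delta_{p-1}f_p$ and $f_1\circ f_{n-p-1}=\delta_{n-p-1}f_{n-p}$, subject to $(p-2+\beta_1)\gamma_{p-1}=0$ and $(n-p-2+\beta_1)\delta_{n-p-1}=0$. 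Since $n-p-2=p+t-2\ge p-1>p-2$, one already gets $\delta_{p-1}=0$, so in fact $f_1\circ f_{p-1}=\gamma_{p-1}e_p$.

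The first key step is to determine the admissible values of $\beta_1$. The identity \eqref{eq9}, namely $\sum_{k=0}^{p}C_{p+i-1-k}^{i-1}\beta_k=0$, comes from $0=(e_1\circ e_p)\circ f_i$ and carries information exactly for $1\le i\le t-1$, since for $i\ge t$ the relevant target $f_{p+i+1}$ already vanishes. I would substitute the closed form $\beta_k=\prod_{j=0}^{k-1}\frac{j+\beta_1}{j+1}=C_{\beta_1+k-1}^{k}$ into \eqref{eq9} and resum: writing both binomials through upper negation and applying the Chu--Vandermonde convolution yields $\sum_{k=0}^{p}C_{p+i-1-k}^{i-1}\beta_k=C_{\beta_1+p+i-1}^{p}$, so \eqref{eq9} is equivalent to $C_{\beta_1+p+i-1}^{p}=0$ for $1\le i\le t-1$. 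As the polynomial $C_{x}^{p}$ vanishes precisely at $x\in\{0,1,\dots,p-1\}$, intersecting these conditions over $i=1,\dots,t-1$ gives $\beta_1\in\{-p,-(p-1),\dots,-(t-1)\}$ (a non-empty set exactly because $t\le p+1$). In particular $\beta_1=1$ cannot occur when $t\ge 3$, which explains why, unlike in the cases $t\le 2$, no $\beta_1=1$ family appears; moreover $n-p-2+\beta_1\ne 0$, hence $\delta_{n-p-1}=0$ automatically.

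Next I would split on $\gamma_{p-1}$. If $\gamma_{p-1}=0$, then all $f\circ f$-products vanish and the table is exactly $\widetilde{A_9}$, with $\beta_1$ ranging over the set found above. If $\gamma_{p-1}\ne 0$, then $(p-2+\beta_1)\gamma_{p-1}=0$ forces $\beta_1=2-p$, whence $\beta_k=C_{\beta_1+k-1}^{k}=(-1)^kC_{p-2}^{k}$ for $0\le k\le p-2$ and $\beta_{p-1}=\beta_p=0$ (this value is compatible with the constraint of the previous paragraph exactly when $t\le p-1$). I would then run the general change of basis $e_1^\prime=Ae_1+Bf_1$, $f_1^\prime=Ce_1+Df_1$ (with $C=0$ forced by $f_1^\prime\circ f_1^\prime=0$, as in Proposition~\ref{prop7}); a direct computation shows $f_1^\prime\circ f_{p-1}^\prime=\gamma_{p-1}^\prime e_p^\prime$ with $\gamma_{p-1}^\prime$ a nonzero rescaling of $\gamma_{p-1}$, so $\gamma_{p-1}$ may be normalised to $1$, yielding $\widetilde{A_{10}}$.

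It then remains to check that the algebras on the list are pairwise non-isomorphic. The value $\beta_1$ is preserved under isomorphism (exactly as in the type~$I$ case, where it can be recovered from the action of $L_{f_1}$ on the successive quotients $A^i/A^{i+1}$), so different $\beta_1$ give non-isomorphic algebras; and $\widetilde{A_9}$ with $\beta_1=2-p$ is distinguished from $\widetilde{A_{10}}$ by the fact that $\langle f_1,\dots,f_{n-p}\rangle$ is an abelian ideal of the former but not even a subalgebra of the latter. The only genuinely delicate point is the closed-form evaluation of the sum in \eqref{eq9}; once that identity (a disguised Chu--Vandermonde convolution, cf.\ Lemma~\ref{lem2}) is in hand, the case analysis and all the normalisations are routine and parallel the type~$I$ arguments.
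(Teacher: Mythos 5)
The paper does not actually prove this theorem: it is stated in the block introduced by ``We list the next theorems \dots without proofs,'' so there is nothing to compare line by line. Your reconstruction follows exactly the route the authors indicate (type~$II$ analogues of Propositions~\ref{prop1}--\ref{prop7} plus the relation \eqref{eq9}), and its one genuinely non-routine ingredient is sound: writing $\beta_k=\prod_{j=0}^{k-1}\frac{j+\beta_1}{j+1}=C_{\beta_1+k-1}^{k}$ and resumming \eqref{eq9} by upper negation and Chu--Vandermonde does give $\sum_{k=0}^{p}C_{p+i-1-k}^{i-1}\beta_k=C_{\beta_1+p+i-1}^{p}$ (I checked this independently), and intersecting the zero sets of $C_x^p$ over $1\le i\le t-1$ yields precisely $\beta_1\in\{-p,\dots,-(t-1)\}$, which is the restriction in the statement and subsumes the paper's use of Lemma~\ref{lem2} for the extremal case $n=3p+1$. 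Your observations that the range $1\le i\le t-1$ is exactly where \eqref{eq9} carries information, that $\beta_1=1$ is thereby excluded for $t\ge 2$, and that $n-p-2+\beta_1\neq0$ forces $\delta_{n-p-1}=0$ correctly explain why the $t\ge3$ list is shorter than the $t=1,2$ lists.

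Two points deserve attention. First, your parenthetical remark that $\beta_1=2-p$ is compatible with \eqref{eq9} ``exactly when $t\le p-1$'' should be pushed to its conclusion: since $\widetilde{A_{10}}$ forces $\beta_1=2-p$ via $(p-2+\beta_1)\gamma_{p-1}=0$, and \eqref{eq9} at $i=p-1$ evaluates to $C_p^p=1\neq0$, the algebra $\widetilde{A_{10}}$ can only occur for $3\le t\le p-1$, not for the full range $3\le t\le p+1$ claimed in the statement (for $t\in\{p,p+1\}$ the identity $(e_1\circ e_p)\circ f_{p-1}=e_1\circ(e_p\circ f_{p-1})+e_1\circ(f_{p-1}\circ e_p)$ fails, since the $f\circ f$ products do not enter the derivation of \eqref{eq7}--\eqref{eq8}). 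So your argument is more careful than the stated theorem here; you should say so explicitly rather than leave it in a parenthesis. Second, like the paper, you only establish necessity: you should at least remark that each table with $\beta_1\in\{-p,\dots,-(t-1)\}$ does satisfy the Zinbiel identity (the paper verifies only the case $t=p+1$, $\beta_1=-p$), and the non-isomorphism of $\widetilde{A_9}$ with $\beta_1=2-p$ from $\widetilde{A_{10}}$ is cleaner via the basis-change computation ($C=0$, $f_1'=Df_1$, so $\gamma_{p-1}=0$ iff $\gamma_{p-1}'=0$) than via the abelian-ideal criterion, which requires knowing that the span of the $f_i$ is canonically determined.
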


\end{document}